\documentclass[11pt]{amsart}
\usepackage[all]{xy}
\usepackage{amscd}
\usepackage{amsfonts}
\usepackage{amsmath}
\usepackage{amssymb}
\usepackage{amsthm}
\usepackage{bm}
\usepackage{CJK}
\usepackage{dsfont}
\usepackage{fancyhdr}
\usepackage{graphics}
\usepackage{graphicx}
\usepackage{latexsym}
\usepackage{lineno}
\usepackage{mathrsfs}
\usepackage{parskip}
\usepackage{slashed}
\usepackage{titletoc}
\usepackage[usenames]{color}
\allowdisplaybreaks

\hoffset -1.2cm
\voffset -1.2cm
\setlength{\textwidth}{6.4in}
\setlength{\textheight}{9in}

\pagestyle{plain}

\begin{CJK*}{GBK}{kai}
\newtheorem{thm}{Theorem}[section]
\end{CJK*}

\newtheorem{lem}[thm]{Lemma}

\newtheorem{defn}[thm]{Definition}
\numberwithin{equation}{section}

\newcommand{\ric}{\mathrm{Ric}}
\newcommand{\tr}{\mathrm{tr}}
\newcommand{\ppt}{\frac{\partial}{\partial t}}

\newcommand{\mn}{\sqrt{-1}}

\newcommand{\ddbar}{\sqrt{-1}\partial\overline{\partial}}
\newcommand{\md}{\mathrm{d}}
\newcommand{\X}{ \mathfrak{X}(M)}
\newcommand{\N}{\mathcal{N}}
\newcommand{\cric}{\mathrm{Ric}^{(1,1)}}

\newcommand{\tmax}{T_{\mathrm{max}}}

\pagestyle{fancy}


\begin{document}
\begin{CJK}{GBK}{song}
\title{An almost complex Chern-Ricci flow}
\makeatletter
\let\uppercasenonmath\@gobble
\let\MakeUppercase\relax
\let\scshape\relax
\makeatother
\author{Tao Zheng }
\address{Institut Fourier, Universit\'{e} Grenoble Alpes, 100 rue des maths,
Gi\`{e}res 38610, France}
\email{zhengtao08@amss.ac.cn}
\subjclass[2010]{32Q60, 32W20, 35K96, 53C15}
\thanks {Supported by National Natural Science Foundation of China grant No. 11401023, and the author's post-doc is supported by the European Research Council (ERC) grant No. 670846 (ALKAGE)}
\keywords{evolution equation, almost Hermitian metric, maximal time existence, the Chern-Ricci form}
\begin{abstract}
We consider the evolution of an almost Hermitian metric by the $(1,1)$ part of its Chern-Ricci form on almost complex manifolds. This is an evolution equation first studied by Chu and coincides with the Chern-Ricci flow if the complex structure is integrable and with the K\"ahler-Ricci flow if moreover the initial metric is K\"ahler. We find the maximal existence time for the flow in term of the initial data and also give a convergence result. As an example, we study this flow on the (locally) homogeneous manifolds in more detail.
\end{abstract}
\maketitle
\section{Introduction}
Let $(M,J)$ be a compact almost complex manifold (without boundary) with $\dim_{\mathbb{R}}M=2n$, where $J$ is the almost complex structure. Then assume that $g_0$ is an almost Hermitian metric on $(M,J)$, i.e., $g_0$ is a Riemannian metric satisfying $g_0(JX,JY)=g_0(X,Y)$ for any vector fields $X$ and $Y$. Associated to $g_0$ is a unique real $(1,1)$ form $\omega_0$ defined by $\omega_0(X,Y):=g_0(JX,Y)$ for any vector fields $X$ and $Y$ and vice versa. In what follows, we will not distinguish the two terms.

Since Hamilton \cite{Ha} introduced the Ricci flow, it has established many deep results in topological, smooth and Riemannian manifolds (see for example \cite{BS,Ha2,P1}). Next, we consider the parabolic flows of metrics on $M$  starting at $g_0$ which preserve the Hermitian condition and reveal the information about the structure of $M$ as a complex manifold. When $g_0$ is a K\"ahler metric, i.e., $\md\omega_0=0$, the Ricci flow does exactly this and it is called the K\"ahler-Ricci flow firstly introduced by Cao \cite{Ca}.
The behavior of the K\"ahler-Ricci flow is deeply intertwined with the complex and algebro-gemetric properties of $M$ (see for example \cite{Ca,CW,csw,FIK,PSSW,PS2,ST,ST2,ST3,SW,SW2,SW3,SW4,SY,szkrf,Ti,TZ,To1,tosattikawa,Ts,Ts2,weinkovekrf}).

If the Hermitian metric $g_0$ is not K\"ahler, then there are two types of Ricci curvature which are equal to each other when $\md\omega_0=0$. There are also two types of the evolution of the Hermitian metric. One is the Chern-Ricci flow which firstly introduced by Gill \cite{gill} when the first Bott-Chern class vanishes and is studied deeply by Tosatti and Weinkove (and Yang) \cite{twjdg,twcomplexsurface,twymathann}. The Chern-Ricci flow is a natural evolution equation on complex manifolds and its behavior reflects the underlying geometry (see also \cite{ftwz,gillmmp,gillscalar,laurent,lr,niexiaolan,yangxiaokui,zhengtaocjm} and references therein). Another type of the evolution of Hermitian metric was introduced by Streets and Tian \cite{StT,StT2,StT3} (see also \cite{LY}) and was generalized to almost Hermitian manifolds by Vezzoni \cite{V}.

Given $F\in C^{\infty}(M,\mathbb{R})$, Chu, Tosatti and Weinkove \cite{ctw} solved  the following Monge-Amp\`ere equation on almost Hermitian manifold
\begin{align}
\label{ecma}
&(\omega_0+\ddbar\varphi)^n= e^{F+b}\omega_0^n\\
&\omega_0+\ddbar\varphi>0, \quad\sup\varphi=0,\nonumber
\end{align}
for a unique $b\in\mathbb{R}$, where $\ddbar\varphi=\frac{1}{2}\left(\md J\md \varphi\right)^{(1,1)}$ (see  \eqref{ddbarvarphi} in Section \ref{sec2}).
When $J$ is integrable and $\omega_0$ is K\"ahler, \eqref{ecma} was solved by Yau \cite{Y} to confirm the Calabi conjecture. Tosatti and Weinkove \cite{TW2} solved \eqref{ecma} for any dimension if $J$ is integrable (see also \cite{Ch,TW1}).

In this paper, we consider the evolution equation for almost Hermitian forms suggested by Chu, Tosatti and Weinkove \cite{ctw}
\begin{align}
\label{acrf}
\frac{\partial\omega_t}{\partial t}=-\cric(\omega_t),\quad \omega(0)=\omega_0
\end{align}
on almost Hermitian manifold $(M,\omega_0,J)$. Here for convenience, we call $\cric(\omega)$ is the $(1,1)$ part of the Chern-Ricci form of $\omega$.
This flow coincides exactly with the Chern-Ricci flow if $J$ is integrable, and the K\"ahler-Ricci flow if furthermore $\omega_0$ is K\"ahler.

Chu \cite{chu1607} firstly studied the flow \eqref{acrf} in the case where there exists an almost Hermitian metric $\omega_0$ with $\cric(\omega_0)=\ddbar F$ for some $F\in C^{\infty}(M,\mathbb{R})$ and proved that the solution to \eqref{acrf} exists for all the time and converges to an almost Hermitian metric $\omega_{\infty}$ with $\cric(\omega_{\infty})=0$ (see more details in Section \ref{sec7}).

In this paper, we characterize the maximal existence time for a solution for the flow \eqref{acrf}. For this aim, we rewrite the flow as
\begin{align*}
\frac{\partial\omega_t}{\partial t}=-\cric(\omega_0)+\ddbar\theta(t),\quad\mbox{with}\quad\theta(t)=\log\frac{\omega_t^n}{\omega_0^n}.
\end{align*}
Hence, as long as the flow exists, the solution $\omega_t$ starting at $\omega_0$ must be of the form $\omega_t=\alpha_t+\ddbar\Upsilon(t)$ with $\frac{\partial \Upsilon}{\partial t}=\theta(t)$ and $\alpha_t=\omega_0-t\cric(\omega_0)$.
We define a number $T=T(\omega_0)$ with $0<T\leq \infty$ by
\begin{align}
T:=\sup\Big\{t\geq0:\;\exists\, \phi\in C^{\infty}(M,\mathbb{R})\,\mbox{such that}\,\alpha_t+\ddbar\phi>0\Big\}.
\end{align}
Note that for any other Hermitian metric $\omega_0'=\omega_0+\ddbar \psi>0$ with $\psi\in C^{\infty}(M,\mathbb{R})$, we have $T(\omega_0')=T(\omega_0)$.
 Indeed,  \eqref{diffric} yields that
\begin{align*}
\alpha_t+\ddbar\phi
=&\omega_0-t\cric(\omega_0')+\ddbar\left(\phi+t\log\frac{\omega_0^n}{\omega_0'^n}\right)\\
=&\omega_0'-t\cric(\omega_0')+\ddbar\left(\phi-\psi+t\log\frac{\omega_0^n}{\omega_0'^n}\right),
\end{align*}
as required.

It is easy to see that a solution to \eqref{acrf} cannot exist beyond time $T$. Indeed, we have
\begin{thm}
\label{mainthm}
There exists a unique maximal solution to the flow \eqref{acrf} on $[0,\,T)$.
\end{thm}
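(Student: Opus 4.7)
The plan is to reduce the geometric flow~(\ref{acrf}) to a scalar parabolic complex Monge-Amp\`ere equation, establish short-time existence by a standard parabolic argument, propagate a priori estimates up to the cohomologically-defined time $T$, and finally rule out continuation past $T$. Using the identity \eqref{diffric} recalled in the excerpt, which gives $\cric(\omega_t)=\cric(\omega_0)-\ddbar\log(\omega_t^n/\omega_0^n)$, any smooth solution of the form $\omega_t=\alpha_t+\ddbar\Upsilon(t)$ to~(\ref{acrf}) satisfies the scalar parabolic Monge-Amp\`ere equation
\begin{equation*}
\frac{\partial\Upsilon}{\partial t}=\log\frac{(\alpha_t+\ddbar\Upsilon)^n}{\omega_0^n},\qquad \Upsilon(0)=0,\qquad \alpha_t+\ddbar\Upsilon>0,
\end{equation*}
and conversely any smooth solution of this scalar equation yields, by applying $\ddbar$ and $\partial_t$, a solution of~(\ref{acrf}). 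The theorem is thus equivalent to the assertion that this scalar equation has a unique smooth maximal solution on $[0,T)$.

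For short-time existence, the linearisation of the right-hand side at a positive background $\Upsilon(t_0)$ is the Chern Laplacian of the positive almost Hermitian form $\alpha_{t_0}+\ddbar\Upsilon(t_0)$, hence strictly elliptic; an inverse function theorem in parabolic H\"older spaces then furnishes a unique smooth solution on a small interval beyond $t_0$. To extend the solution up to any $T'<T$, I would use the definition of $T$ to pick $\phi\in C^{\infty}(M,\mathbb{R})$ with $\alpha_{T'}+\ddbar\phi>0$ and form the linearly interpolated background $\chi_t:=\alpha_t+(t/T')\ddbar\phi$, which is strictly positive on $[0,T']$. On this interval the standard sequence of a priori estimates would be carried out in order: a uniform $C^0$ bound on $\Upsilon$ via the parabolic maximum principle applied with $\chi_t$ as reference; a $C^0$ bound on $\partial\Upsilon/\partial t$ by differentiating the equation in $t$ and again applying the maximum principle; a second-order bound $\tr_{\omega_0}\omega_t\le C$ obtained from an Aubin-Yau-type computation corrected for the torsion of the Chern connection; then a lower bound on $\omega_t^n/\omega_0^n$ that combined with the preceding yields $C^{-1}\omega_0\le\omega_t\le C\omega_0$; and finally higher-order regularity through the parabolic Evans-Krylov theorem and parabolic Schauder estimates. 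Together with short-time existence, these estimates produce a smooth solution on $[0,T)$.

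For maximality, suppose a smooth solution $\omega_t$ existed on $[0,T^{*})$ for some $T^{*}>T$; then for any $t_{0}\in(T,T^{*})$ the potential $\phi:=\Upsilon(t_{0})$ would satisfy $\alpha_{t_{0}}+\ddbar\phi=\omega_{t_{0}}>0$ with $t_{0}>T$, directly contradicting the definition of $T$. Uniqueness follows by applying the parabolic maximum principle to the difference of two smooth solutions of the scalar equation: if $\Upsilon_1$ and $\Upsilon_2$ share the initial datum, their difference satisfies a linear parabolic equation with zero initial value and therefore vanishes identically on the common interval of existence.

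The principal technical obstacle is the second-order estimate. In the almost complex category there are no holomorphic coordinates, so the torsion tensor of the Chern connection enters the evolution of $\tr_{\omega_0}\omega_t$ in an essential way and the classical Yau/Aubin computation no longer closes. I would therefore invoke the almost-Hermitian maximum-principle techniques developed by Chu, Tosatti and Weinkove~\cite{ctw}, which are designed precisely to absorb these torsion-induced error terms and which give the $C^2$ estimate on every interval $[0,T']$ with $T'<T$ needed to complete the argument above.
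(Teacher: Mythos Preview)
Your overall architecture matches the paper's: reduce to the scalar parabolic Monge--Amp\`ere equation, obtain short-time existence, propagate a priori estimates on any $[0,T']$ with $T'<T$ using a reference family built from a potential for $\alpha_{T'}$, then extend to $[0,T)$ and rule out continuation beyond $T$. Two points, however, leave genuine gaps.

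First, you omit the first-order (gradient) estimate $\sup_{M\times[0,T')}|\partial\Upsilon|_{g_0}\le C$, which in the paper is a separate and nontrivial step (Theorem~\ref{thm1order}) and is a necessary input to the second-order argument you eventually invoke. The Chu--Tosatti--Weinkove technique you cite bounds the largest eigenvalue of the full real Hessian $\nabla^{2}\Upsilon$ (not merely $\tr_{\omega_0}\omega_t$) by applying the maximum principle to a quantity of the form $\log\lambda_1(\nabla^2\Upsilon)+\phi(|\partial\Upsilon|_{g_0}^2)+h(\Upsilon)$; for the barrier $\phi$ and the subsequent case analysis to make sense one must already know that $|\partial\Upsilon|_{g_0}^2$ is uniformly bounded. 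In the non-integrable setting this gradient bound is itself delicate---there are no local potentials, and torsion and Nijenhuis terms appear in the evolution of $|\partial\Upsilon|_{g_0}^2$---so it cannot be skipped or absorbed into the $C^0$ and $\partial_t\Upsilon$ bounds you list.

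Second, your appeal to ``the parabolic Evans--Krylov theorem'' is not available here as stated. Without holomorphic coordinates the equation cannot be cast in the form required by the standard real or complex Evans--Krylov arguments; the paper instead relies on the almost-Hermitian $C^{2,\alpha}$ estimate of Chu~\cite{chujianchuncvpde} (see also~\cite{TWWY}) together with the higher-order bootstrap in~\cite{chu1607}. Passing from the $C^2$ bound to $C^{2,\alpha}$ in the non-integrable case is one of the genuine analytic inputs, not a routine citation.
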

In the special case when $J$ is integrable, this is already known by the result of Tian and Zhang \cite{TZ} who extended earlier work of Cao \cite{Ca} and Tsuji \cite{Ts,Ts2} when $\omega_0$ is K\"{a}hler, and by the result of Tosatti and Weinkove \cite{twjdg} (see also \cite{twymathann}) when $\omega_0$ is even not K\"{a}hler.

We point out that the flow \eqref{acrf} is equivalent to the scalar partial differential equation
\begin{align}
\label{phit}
\ppt \phi(t)=\log \frac{(\alpha_t+\ddbar \phi_t)^n}{\omega_0^n}, \quad \alpha_t+\ddbar \phi_t>0,\quad\phi(0)=0,
\end{align}
on the same time interval.
Indeed, assume that $\phi_t$ is the solution to \eqref{phit}. We set $\omega_t=\alpha_t+\ddbar\phi_t>0$. From \eqref{diffric}, it follows that
\begin{align*}
\ppt \omega_t
=\ppt(\alpha_t+\ddbar \phi_t)
=-\cric(\omega_0)-\cric(\omega_t)+\cric(\omega_0)=-\cric(\omega_t),
\end{align*}
i.e., $\omega_t$ is the solution to the flow \eqref{acrf}.

On the other hand, assume that $\omega_t$ is the solution to \eqref{acrf}. We set
$$
\phi_t=\int_0^t\log \frac{\omega_s^n}{\omega_0^n}\md s
$$
with $\phi(0)=0$. This, together with \eqref{diffric}, yields
\begin{align*}
\ppt(\omega_t-\alpha_t-\ddbar\phi_t)
=-\cric(\omega_t)+\cric(\omega_0)+\cric(\omega_t)-\cric(\omega_0)=0,
\end{align*}
with $(\omega_t-\alpha_t-\ddbar\phi_t)|_{t=0}=0$, i.e., $\omega_t\equiv\alpha_t+\ddbar\phi_t$. It follows that $\phi_t$ is the solution to the scalar partial differential equation \eqref{phit}.

Next, we consider a convergence result  about the flow \eqref{acrf}. Since there is no Bott-Chern cohomology group on almost complex manifold if $J$ is not integrable, the statement  of this result may be slightly different.
\begin{thm}
\label{thm2}
Assume that $(M,\omega_0,J)$ is an almost Hermitian manifold  equipped a volume form $\Omega$  with $\cric(\Omega)<0$.  There exists an  almost Hermitian metric $\omega_{\infty}$ with $\cric(\omega_{\infty})=-\omega_{\infty}$, such that for any initial almost Hermitian metric $\omega_0$, the solution to \eqref{acrf} exists for all the time and that $\omega(t)/t$ converge smoothly to $\omega_{\infty}$ as $t\rightarrow\infty$.
\end{thm}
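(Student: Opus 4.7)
I would follow the almost-complex analogue of the Cao-Tsuji-Tian-Zhang convergence argument for the K\"ahler-Ricci flow with negative first Chern class. Set $\chi := -\cric(\Omega) > 0$ and $h := \log(\omega_0^n/\Omega)$. The difference formula \eqref{diffric} yields $-\cric(\omega_0) = \chi + \ddbar h$, hence
\begin{align*}
\alpha_t = \omega_0 - t\cric(\omega_0) = \omega_0 + t\chi + t\ddbar h.
\end{align*}
Taking $\phi = -th$ gives $\alpha_t + \ddbar \phi = \omega_0 + t\chi > 0$ for every $t \geq 0$, so $T(\omega_0) = \infty$ and Theorem~\ref{mainthm} already produces a smooth global solution $\omega(t)$ on $[0,\infty)$.

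\textbf{Stationary metric.} I would construct $\omega_\infty$ by solving the elliptic almost-complex Monge-Amp\`ere equation
\begin{align*}
(\chi + \ddbar v)^n = e^v\Omega, \qquad \chi + \ddbar v > 0,
\end{align*}
and putting $\omega_\infty := \chi + \ddbar v$; a second application of \eqref{diffric} then gives $\cric(\omega_\infty) = -\chi - \ddbar v = -\omega_\infty$. The exponential right-hand side makes the $C^0$ estimate immediate (at an interior maximum of $v$, $\ddbar v \leq 0$ forces $e^v \Omega \leq \chi^n$, and dually at a minimum), the higher-order estimates follow from the almost-complex Monge-Amp\`ere techniques of Chu-Tosatti-Weinkove~\cite{ctw}, and existence then follows by a standard continuity method.

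\textbf{Normalization and convergence of the potential.} Setting $s = \log(1+t)$ and $\tilde\omega(s) := \omega(t)/(1+t)$, the flow \eqref{acrf} becomes
\begin{align*}
\frac{\partial \tilde\omega}{\partial s} = -\cric(\tilde\omega) - \tilde\omega,
\end{align*}
whose unique stationary point is $\omega_\infty$. With the uniformly positive reference $\hat\chi_s := e^{-s}\omega_0 + (1-e^{-s})\chi$, I write $\tilde\omega = \hat\chi_s + \ddbar\psi(s)$; after an appropriate affine-in-$s$ gauge choice, the flow reduces to
\begin{align*}
\frac{\partial \psi}{\partial s} = \log\frac{\tilde\omega^n}{\Omega} - \psi,
\end{align*}
whose stationary solution is $v$. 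Setting $w := \psi - v$ one has $\tilde\omega = \omega_\infty + e^{-s}(\omega_0 - \chi) + \ddbar w$ and $\partial_s w = \log(\tilde\omega^n/\omega_\infty^n) - w$. At a spatial maximum of $w$, $\ddbar w \leq 0$ yields $\tilde\omega \leq \omega_\infty + C e^{-s}\omega_\infty$ and hence $\partial_s w_{\max} \leq Ce^{-s} - w_{\max}$, forcing $w_{\max}(s) \to 0$ exponentially (with rate $\mathcal{O}(s e^{-s})$); the dual argument controls $w_{\min}$ and in particular gives a uniform $C^0$ bound on $\psi$.

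\textbf{Higher regularity and main obstacle.} Combining the exponential $C^0$ decay of $w$ with uniform $C^\infty$ bounds for the normalized flow upgrades the convergence to smooth convergence of $\tilde\omega(s)$ to $\omega_\infty$, which is equivalent to $\omega(t)/t \to \omega_\infty$ smoothly. The decisive and most technical step is precisely this uniform higher regularity along the rescaled flow: the parabolic Laplacian, gradient, and Evans-Krylov estimates must be established with constants independent of $s$. The non-integrability of $J$ produces torsion terms that make these estimates substantially more delicate than in the integrable Chern-Ricci case \cite{twjdg,twymathann}, so the central task is to deploy and, where necessary, sharpen the parabolic counterparts of the almost-Hermitian estimates of Chu-Tosatti-Weinkove so that all constants remain uniformly bounded as $s \to \infty$.
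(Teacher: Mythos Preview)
Your argument is essentially correct, but it is organized differently from the paper's, and the difference is worth noting.

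The paper does \emph{not} construct $\omega_\infty$ in advance. Instead it works directly with the normalized potential $\varphi$ solving $\partial_s\varphi=\log(\tilde\omega^n/\Omega)-\varphi$, proves uniform $C^0$ bounds on $\varphi$ and $\dot\varphi$ by maximum-principle arguments applied to the auxiliary quantities $\dot\varphi+\varphi$ and $\dot\varphi+\varphi+ns-e^s\dot\varphi$, then reruns the first- and second-order estimates of Sections~\ref{sec4}--\ref{sec5} (with the harmless extra terms coming from the $-\varphi$) to get uniform $C^\infty$ bounds, and finally shows $|\dot\varphi|\le Ce^{-s/2}$, so $\varphi$ converges and the limit $\omega_\infty$ satisfies the Einstein equation. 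The elliptic solvability of $(\chi+\ddbar v)^n=e^v\Omega$ is thus a \emph{consequence} of the theorem rather than an input.

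Your route---solve the elliptic equation first, set $w=\psi-v$, and read off exponential $C^0$ decay from $\partial_s w=\log(\tilde\omega^n/\omega_\infty^n)-w$---is also valid and gives a slightly cleaner decay argument once $v$ is in hand. The trade-off is that you must independently justify the elliptic step: the equation $(\chi+\ddbar v)^n=e^v\Omega$ is not literally the one solved in \cite{ctw} (which has $e^{F+b}$ with $F$ prescribed and $b$ a free constant), so you need to spell out the continuity path, check openness via invertibility of $\Delta^C_{\tilde\omega}-1$, and verify that the $C^1$ and $C^2$ estimates of \cite{ctw} go through with the extra $\nabla v$ and $\nabla^2 v$ terms produced by differentiating $e^v$. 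This is routine, but it should be stated. Both approaches ultimately rest on the same uniform parabolic gradient and Hessian estimates for the normalized flow, which you correctly flag as the technical core; in the paper these are obtained by repeating the proofs of Theorems~\ref{thm1order} and~\ref{thm2order} with only cosmetic changes.
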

If the complex structure $J$ is integrable, then the existence of such volume form $\Omega$ is equivalent to the fact that $M$ is K\"ahler with negative first Chern class, and our theorem coincides precisely with \cite[Theorem 1.7]{twjdg}, i.e., on K\"ahler manifold with negative first Chern class, the Chern-Ricci flow, starting at any initial Hermitian metric $\omega_0$, exists for all the time and $\omega(t)/t$ converge smoothly to the unique K\"ahler-Einstein metric $\omega_{\mathrm{KE}}$ on $M$.

The outline of the paper is as follows. In Section \ref{sec2}, we collect some preliminaries about almost complex geometry. In Section \ref{sec3}, we give the uniform priori estimates of the solution to \eqref{apcma} and its time derivative. In Section \ref{sec4} and Section \ref{sec5}, we give the first and second order estimates of the solution to \eqref{apcma} respectively. In Section \ref{sec6}, we get the higher order estimates of the solution to \eqref{apcma} and complete the proof of Theorem \ref{mainthm}. In Section \ref{sec7}, we prove some convergence results including Theorem \ref{thm2}. In Section \ref{secexample}, as an example, we study the flow \eqref{acrf} on a compact almost Hermitian manifold $M$ whose universal cover is a Lie group $G$ such that if $\pi:G\longrightarrow M$ is the covering map, then $\pi^{\ast}\omega_0$ and $\pi^{\ast}J$ are left invariant in more detail.

\noindent {\bf Acknowledgements}
The author thanks Professor Valentino Tosatti for suggesting him this question and for many other invaluable conversations and directions.
The author also thanks Professor Jean-Pierre Demailly and Professor Ben Weinkove  for some useful comments. The author is also grateful to the anonymous referees and the editor for their careful reading and helpful suggestions which greatly improved the paper.
\section{Preliminaries}
\label{sec2}
In this section, we collect some basic materials about almost Hermitian geometry (see for example \cite{kobayashi,twyau}).
\subsection{The Nijenhuis tensor on almost complex manifolds}
Let $(M,J)$ be an almost complex manifold with $\dim_{\mathbb{R}}M=2n$, where $J$ is the almost complex structure, i.e., $J\in \mathrm{End}(TM)$ with $J^2=-\mathrm{Id}_{TM}$. Here $TM$ is the tangent vector bundle of $M$. Denote by $\X$ the set of all the sections of $TM$, i.e., the set of all the vector fields. Then we have
$$
TM\otimes_{\mathbb{R}}\mathbb{C}=T^{1,0}M\oplus T^{0,1}M,
$$
where
$$
T^{1,0}M:=\Big\{X-\mn JX:\;\forall\;X\in TM\Big\}
$$
and
$$
T^{0,1}M:=\Big\{X+\mn JX:\;\forall\; X\in TM\Big\}.
$$
Denote by $\Lambda^1 M$ the dual of $TM$.
We extend the Nijenhuis tensor $J$ to any $p$ form $\varpi$ by
\begin{align}
\label{jkuozhang}
(J\varpi)(X_1,\cdots,X_p):=(-1)^p\varpi(JX_1,\cdots,JX_p),\quad \forall\;X_1,\cdots,X_p\in \X.
\end{align}
It is easy to check that for any $(2n-p)$ form $\xi$ and $p$ form $\psi$, there holds
\begin{align}
\xi\wedge(J\psi)=(-1)^p(J\xi)\wedge\psi.
\end{align}
We also have
$$
\bigwedge^1 M\otimes_{\mathbb{R}}\mathbb{C}=\bigwedge^{1,0}M\oplus\bigwedge^{0,1}M,
$$
where
$$
\bigwedge^{1,0}M:=\left\{\eta+\mn J\eta:\;\forall\; \eta\in \bigwedge^1M\right\}
$$
and
$$
\bigwedge^{0,1}M:=\left\{\eta-\mn J\eta:\;\forall\; \eta\in \bigwedge^1M\right\}.
$$
It is easy to see that
$$
\left(T^{1,0}M\right)^{\ast}=\bigwedge^{1,0}M,\quad \left(T^{0,1}M\right)^{\ast}=\bigwedge^{0,1}M.
$$
We also denote that
$$
\bigwedge^{p,q}M:=\bigwedge^p\left(\bigwedge^{1,0}M\right)\otimes\bigwedge^q\left(\bigwedge^{0,1}M\right).
$$
Then we have
$$
\bigwedge^pM\otimes_{\mathbb{R}}\mathbb{C}=\oplus_{r+s=p}\bigwedge^{r,s}M.
$$
A $2$ form $\zeta$ is a $(1,1)$ form if and only if there holds
\begin{align}
\label{11biaozhun}
\zeta(X,Y)=\zeta(JX,JY),\quad \forall\,X,Y\in\X.
\end{align}
Indeed, $\zeta$ is a $(1,1)$ form if and only if
$$
\zeta(X+\mn JX,Y+\mn JY)=\zeta(X-\mn JX,Y-\mn JY)=0,\quad \forall\,X,Y\in\X,
$$
as required. Therefore, the $(1,1)$ part of any $2$ form $\xi$, denoted by $\xi^{(1,1)}$, can be given by
\begin{align}
\label{11part}
\xi^{(1,1)}(X,Y)=\frac{1}{2}\left(\xi(X,Y)+\xi(JX,JY)\right),\quad\forall\, X,Y\in\X.
\end{align}
We also define
\begin{align}
\xi^{\mathrm{ac}}(X,Y):=\xi(X,Y)-\xi^{(1,1)}(X,Y)=\frac{1}{2}\left(\xi(X,Y)-\xi(JX,JY)\right),\quad\forall\, X,Y\in\X.
\end{align}
\begin{defn}
For any $X,\,Y\in \X$,   Nijenhuis tensor $\N$  is defined by
\begin{align}
4\label{torsion}
\N(X,\,Y)=[JX,\,JY]-J[JX,\,Y]-J[X,\,JY]-[X,\,Y].
\end{align}
If $\N=0$, then the Nijenhuis tensor is called integrable.
\end{defn}
For the Nijenhuis tensor, a direct calculation yields
\begin{lem}\label{lemn}
The Nijenhuis tensor $\N$ satisfies
\begin{align*}
\N(X,\,Y)=&-\N( Y,\,X),\quad \N(JX,\,Y)=-J\N(X,\,Y),\\
\N(X,\,JY)=&-J\N(X,\,Y),\quad \N(JX,\,JY)=-\N(X,\,Y).
\end{align*}
Moreover, $\N(JX,\,Y)=\N(X,\,JY)$.
\end{lem}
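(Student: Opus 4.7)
The plan is to verify all five identities directly from the definition
\[
\N(X,Y) = [JX,JY] - J[JX,Y] - J[X,JY] - [X,Y]
\]
by substitution, using only the antisymmetry of the Lie bracket and $J^{2}=-\mathrm{Id}$. No structural machinery is needed; this is essentially a bookkeeping lemma.

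First I would establish the antisymmetry $\N(X,Y)=-\N(Y,X)$. Swapping $X$ and $Y$ in the definition and using that each Lie bracket flips sign under transposition, every one of the four terms in $\N(Y,X)$ picks up a minus sign, so $\N(Y,X)=-\N(X,Y)$ drops out at once.

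Next I would prove $\N(JX,Y)=-J\,\N(X,Y)$ by substituting $JX$ in place of $X$ in the definition. Two of the resulting terms contain $J(JX)=-X$, and collecting them gives
\[
\N(JX,Y)=-[X,JY]+J[X,Y]-J[JX,JY]-[JX,Y].
\]
On the other side, expanding $-J\,\N(X,Y)$ and using $J^{2}=-\mathrm{Id}$ on the two middle terms produces the same four expressions, so the two sides agree. The identity $\N(X,JY)=-J\,\N(X,Y)$ then follows either by the same substitution on the second slot, or more quickly by combining the antisymmetry with the identity just proved:
\[
\N(X,JY)=-\N(JY,X)=-(-J\,\N(Y,X))=J\,\N(Y,X)=-J\,\N(X,Y).
\]
Applying $J$-linearity of the previous relation once more gives
\[
\N(JX,JY)=-J\,\N(X,JY)=-J(-J\,\N(X,Y))=J^{2}\N(X,Y)=-\N(X,Y).
\]
Finally, the supplementary identity $\N(JX,Y)=\N(X,JY)$ is immediate, since both sides have just been computed to equal $-J\,\N(X,Y)$.

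The only potential obstacle is purely clerical: keeping the eight Lie-bracket terms and their signs aligned when substituting $JX$ into the definition. Once the antisymmetry step is in hand, all three remaining identities follow by one further substitution and two applications of $J^{2}=-\mathrm{Id}$, so the proof is short and mechanical.
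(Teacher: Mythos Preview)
Your proposal is correct and is precisely the ``direct calculation'' the paper alludes to without spelling out: the paper simply states that the lemma follows by direct computation from the definition \eqref{torsion}, and you have carried out exactly that computation using only the antisymmetry of the Lie bracket and $J^{2}=-\mathrm{Id}$. There is nothing to add or correct.
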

By Lemma \ref{lemn}, for any   $(1,0)$ vector fields $W$ and $V$, it is easy to get
\begin{align*}
\N(V,\,W)=-[V,\,W]^{(0,1)},\quad \N(V,\,\overline{W})=\N(\overline{V},\,W)=0
\end{align*}
and
\begin{align*}
\N(\overline{V},\,\overline{W})=-[\overline{V},\,\overline{W}]^{(1,0)}=-\overline{[V,\,W]^{(0,1)}}.
\end{align*}
Let $e_1,\cdots, e_n$ be the basis of $T^{1,0}M$ and $\theta^1,\cdots,\theta^n$ be the dual basis of $\Lambda^{1,0}M$, i.e.,
$$
\theta^i(e_j)=\delta^i_j,\quad i,j=1,\cdots,n.
$$
Then we have
\begin{align}
\N(\overline{e}_i,\overline{e}_j)=&-[\overline{e}_i,\overline{e}_j]^{(1,0)}=:N_{\overline{i}\overline{j}}^ke_k,\\
\N(e_i,e_j)=&-[e_i,e_j]^{(0,1)}=\overline{N_{\overline{i}\overline{j}}^k}\overline{e}_k,
\end{align}
which implies
\begin{align*}
\N=\frac{1}{2}\overline{N_{\overline{i}\overline{j}}^k}\overline{e}_k\otimes\left(\theta^i\wedge\theta^j\right)+
\frac{1}{2}N_{\overline{i}\overline{j}}^ke_k\otimes\left(\overline{\theta}^i\wedge\overline{\theta}^j\right).
\end{align*}
Denote the structure coefficients of Lie bracket by
\begin{align*}
[e_i,e_j]=:&C_{ij}^ke_k-\overline{N_{\overline{i}\overline{j}}^k}\overline{e}_k,\\
[e_i,\overline{e}_j]=:&C_{i\overline{j}}^ke_k+C_{i\overline{j}}^{\overline{k}}\overline{e}_k,\\
[\overline{e}_i,\overline{e}_j]=:&-N_{\overline{i}\overline{j}}^ke_k+\overline{C_{ij}^k}\overline{e}_k.
\end{align*}
A direct calculation yields
\begin{align}
\label{dtheta}
\md\theta^i=-\frac{1}{2}C_{k\ell}^i\theta^k\wedge\theta^{\ell}
-C_{k\overline{\ell}}^i\theta^k\wedge\overline{\theta}^{\ell}
+\frac{1}{2}N_{\overline{k}\overline{\ell}}^i\overline{\theta}^k\wedge\overline{\theta}^{\ell}.
\end{align}
From \eqref{dtheta},  we can split the exterior differential operator, $\md: \Lambda^{\bullet}M\otimes_{\mathbb{R}}\mathbb{C}\longrightarrow \Lambda^{\bullet+1}M\otimes_{\mathbb{R}}\mathbb{C}$,  into four components (see for example \cite{angella})
$$
\md=A+\partial+\overline{\partial}+\overline{A}
$$
with
\begin{align*}
A:\;&\Lambda^{\bullet,\bullet} M\longrightarrow \Lambda^{\bullet+2,\bullet-1} M\\
\partial:\;&\Lambda^{\bullet,\bullet} M\longrightarrow \Lambda^{\bullet+1,\bullet} M\\
\overline{\partial}:\;&\Lambda^{\bullet,\bullet} M\longrightarrow \Lambda^{\bullet,\bullet+1} M\\
\overline{A}:\;&\Lambda^{\bullet,\bullet} M\longrightarrow \Lambda^{\bullet-1,\bullet+2} M.
\end{align*}
In terms of these components, the condition $\md^2=0$ can be written as
\begin{align*}
A^2=&0,\\
\partial A+A\partial=&0,\\
A\overline{\partial}+\partial^2+\overline{\partial}A=&0,\\
A\overline{A}+\partial\overline{\partial}+\overline{\partial}\partial+\overline{A}A=&0,\\
\partial\overline{A}+\overline{\partial}^2+\overline{A}\partial=&0,\\
\overline{A}\overline{\partial}+\overline{\partial}\overline{A}=&0,\\
\overline{A}^2=&0.
\end{align*}
For any $p$ form $\varpi$, there holds
\begin{align}
\label{waiweifen}
(\md\varpi)(X_1,\cdots,X_{p+1})
=&\sum\limits_{\lambda=1}^{p+1}(-1)^{\lambda+1}X_{\lambda}(\varpi(X_1,\cdots,\widehat{X_{\lambda}},\cdots,X_{p+1}))\\
&+\sum\limits_{\lambda<\mu}(-1)^{\lambda+\mu}\varpi([X_{\lambda},X_{\mu}],X_1,\cdots,\widehat{X_{\lambda}},\cdots,\widehat{X_{\mu}},\cdots,X_{p+1})\nonumber
\end{align}
for any fields $X_1,\cdots,X_{p+1}\in \X$.
For any $\varphi\in C^{\infty}(M,\,\mathbb{R})$, from \eqref{jkuozhang} and \eqref{waiweifen}, a direct computation yields
\begin{align}
\label{djdvarphi20}
(\md J\md\varphi)(e_i,e_j)
=&-2\mn[e_i,e_j]^{(0,1)}(\varphi),\\
\label{djdvarphi11}
(\md J\md\varphi)(\overline{e}_i,\overline{e}_j)
=&2\mn[\overline{e}_i,\overline{e}_j]^{(1,0)}(\varphi),\\
\label{djdvarphi02}
(\md J\md\varphi)(e_i,\overline{e}_j)
=&2\mn \left(e_i\overline{e}_j(\varphi)- [e_i,e_j]^{(0,1)}(\varphi)\right).
\end{align}
A direct calculation shows that
\begin{align}
\label{ddbarvarphi}
\ddbar \varphi=\frac{1}{2}(\md J\md\varphi)^{(1,1)} =\mn\left(e_i\overline{e}_j(\varphi)-[e_i,\overline{e}_j]^{(0,1)}(\varphi)\right)\theta^i\wedge\overline{\theta}^j.
\end{align}
Thanks to \eqref{torsion} and \eqref{waiweifen}, we get that
\begin{align*}
(\md J\md \varphi)(X,Y)-(\md J\md \varphi)(JX,JY)=-4\left(J\circ\N(X,Y)\right)(\varphi), \quad \forall\, X,\,Y\in \X,
\end{align*}
which, together with \eqref{11biaozhun}, yields that $\md J\md \varphi$ is a real $(1,1)$ form if and only if $J$ is integrable.

For any real $(1,1)$ form $\eta=\mn\eta_{i\overline{j}}\theta^i\wedge\overline{\theta}^j$, combining \eqref{jkuozhang} and \eqref{waiweifen} gives
\begin{align}
\label{dbareta}
\overline{\partial}\eta=\frac{\mn}{2}\left(\overline{e}_j\eta_{k\overline{i}}-\overline{e}_i\eta_{k\overline{j}}
-C_{k\overline{i}}^p\eta_{p\overline{j}}
+C_{k\overline{j}}^{p}\eta_{p\overline{i}}
+\overline{C_{ij}^q}\eta_{k\overline{q}}\right)\theta^k\wedge\overline{\theta}^i\wedge\overline{\theta}^j.
\end{align}
\subsection{The almost complex connection on almost complex manifolds} A connection $D$ is called almost complex if $DJ=0$, which means that for any $X,\,Y\in \X$, we have
\begin{align}\label{dj}
0=(D_XJ)(Y)=D_X(J(Y))-J(D_XY).
\end{align}
Therefore, we can define Christoffel symbols by
$$
D_{e_{\alpha}}e_j=\Gamma_{\alpha j}^ke_k,\quad D_{e_{\alpha}}\overline{e}_j=\Gamma_{\alpha\overline{j}}^{\overline{k}}\overline{e}_k,\;
\alpha  \in\{1,\cdots,n,\overline{1},\cdots,\overline{n}\},\quad i,\,j,\,k\in\{1,\cdots,n\}.
$$
Here we also use the notation $e_{\overline{i}}=\overline{e}_i$.
The connection form $(\omega_j^i)$ is defined by
$\omega_{j}^i:=\Gamma_{kj}^i\theta^k+\Gamma_{\overline{k}j}^i\overline{\theta}^k$. Then the torsion $\Theta=(\Theta^i)$ is defined by
$$
\Theta^i:=\md \theta^i-\theta^p\wedge\omega^i_p.
$$
This implies that
\begin{align}
\Theta^i(e_j,e_k)=&\Gamma_{jk}^i-\Gamma_{kj}^i-C_{jk}^i=:T_{jk}^i,\nonumber\\
\label{canonicald}\Theta^i(e_j,\overline{e}_k)=&-\Gamma_{\overline{k}j}^i-C_{j\overline{k}}^i=:S_{j\overline{k}}^i,\\
\Theta^i(\overline{e}_j,\overline{e}_k)=&-\md\theta^i([\overline{e}_j,\overline{e}_k])=N_{\overline{j}\overline{k}}^i.\nonumber
\end{align}
The torsion $\Theta=(\Theta^i)$ can be split into three parts
\begin{align*}
\Theta=\Theta^{2,0}+\Theta^{1,1}+\Theta^{0,2},
\end{align*}
where
\begin{align*}
\Theta^{2,0}=&\left(\frac{1}{2}T_{jk}^i\theta^j\wedge\theta^k\right)_{1\leq i\leq n},\\
\Theta^{1,1}=&\left(S_{j\overline{k}}^i\theta^j\wedge\overline{\theta}^k\right)_{1\leq i\leq n},\\
\Theta^{0,2}=&\left(\frac{1}{2}N_{\overline{j}\overline{k}}^i\overline{\theta}^j\wedge\overline{\theta}^k\right)_{1\leq i\leq n}.
\end{align*}
It follows that the $(0,2)$ part of an almost complex connection is uniquely determined by the Nijenhuis tensor (see for example \cite[Theorem 1.1]{kobayashi} and \cite{twyau}).

The curvature form is defined by
\begin{align*}
\Omega_{j}^{i}=\md\omega_{j}^{i}+\omega_{k}^{i}\wedge\omega_{j}^{k}.
\end{align*}
We define
\begin{align}
\label{rklji}
R_{k \ell j}{}^i
:=&\Omega_{j}^{i}(e_k,e_{\ell})\\
=&e_k(\Gamma_{\ell j}^i)-e_{\ell}(\Gamma_{k j}^i)-C_{k\ell}^p\Gamma_{pj}^i+\overline{N_{\overline{k}\overline{\ell}}^p}\Gamma_{\overline{p}j}^i
+\Gamma_{kp}^i\Gamma_{\ell j}^p-\Gamma_{\ell p}^i\Gamma_{kj}^p,\nonumber
\end{align}
\begin{align}
\label{riccikbarl}
R_{k \overline{\ell} j}{}^i
:=&\Omega_{j}^{i}(e_k,\overline{e}_{\ell})\\
=&e_k(\Gamma_{\overline{\ell} j}^i)-\overline{e}_{\ell}(\Gamma_{k j}^i)-C_{k\overline{\ell}}^p\Gamma_{pj}^i-C_{k\overline{\ell}}^{\overline{p}}\Gamma_{\overline{p}j}^i
+\Gamma_{kp}^i\Gamma_{\overline{\ell} j}^p-\Gamma_{\overline{\ell} p}^i\Gamma_{kj}^p,\nonumber
\end{align}
and
\begin{align}
\label{rkbarlbarji}
R_{\overline{k} \overline{\ell} j}{}^i
:=&\Omega_{j}^{i}(\overline{e}_k,\overline{e}_{\ell})\\
=&\overline{e}_k(\Gamma_{\overline{\ell} j}^i)-\overline{e}_{\ell}(\Gamma_{\overline{k} j}^i)+N_{\overline{k}\overline{\ell}}^p\Gamma_{pj}^i-\overline{C_{k\ell}^{p}}\Gamma_{\overline{p}j}^i
+\Gamma_{\overline{k}p}^i\Gamma_{\overline{\ell} j}^p-\Gamma_{\overline{\ell} p}^i\Gamma_{\overline{k}j}^p.\nonumber
\end{align}
We can write $\Omega=\left(\Omega_j^i\right)=\Omega^{(2,0)}+\Omega^{(1,1)}+\Omega^{(0,2)}$, with
\begin{align*}
\Omega^{(2,0)}=&\left(\frac{1}{2}R_{k\ell j}{}^i\theta^k\wedge\theta^{\ell}\right),\\
\Omega^{(1,1)}=&\left(R_{k\overline{\ell}j}{}^i\theta^k\wedge\overline{\theta}^{\ell}\right),\\
\Omega^{(0,2)}=&\left(\frac{1}{2}R_{\overline{k}\overline{\ell} j}{}^i\overline{\theta}^k\wedge\overline{\theta}^{\ell}\right).
\end{align*}
Then the Chern-Ricci form is $(\mn\Omega_{i}^i)\in 2\pi c_1(M,J)\in H^2(M,\mathbb{R})$ by the Chern-Weil theory (see for example \cite[Chapter 12]{kn2}),
where $c_1(M,J)$ is the first Chern class of $(M,J)$.
\subsection{The canonical connection on almost Hermitian manifolds}
Assume that $(M,\,g,\,J)$ is an almost Hermitian manifold with $\dim_{\mathbb{R}}M=2n$, where a Riemannian metric $g$ is called a Hermitian metric if it satisfies $g(X,\,Y)=g(JX,\,JY),\;\forall\; X,\,Y\in \X$. We also denote the Hermitian metric $g$  by $\langle\cdot,\,\cdot\rangle$. This metric uniquely defines a real $(1,1)$ form $\omega(\cdot,\,\cdot)=g(J\cdot,\, \cdot)$ and vice versa. An affine connection $D$ on $TM$ is called almost Hermitian connection if $D g=D J=0$.
For any $X,\,Y,\,Z\in \X$, we have
\begin{align}\label{dg}
0=(D_Xg)(Y,Z)=X\langle Y,\,Z\rangle-\langle D_XY,\,Z\rangle-\langle Y,\,D_XZ\rangle.
\end{align}
For the almost Hermitian connection, we have (see for example \cite[Theorem 2.1]{kobayashi} and \cite{gau})
\begin{lem}
Let $(M,\,g,\,J)$ be an almost Hermitian manifold with $\dim_{\mathbb{R}}M=2n$. Then for any given vector valued $(1,1)$ form $\Phi=(\Phi^i)_{1\leq i\leq n}$, there exists a unique almost Hermitian connection $D$ on $(M,\,g,\,J)$ such that the $(1,1)$ part if the torsion is equal to the given $\Phi$.
\end{lem}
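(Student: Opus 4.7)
The plan is to construct $D$ by determining its Christoffel symbols directly and then to verify that the resulting connection satisfies all the required conditions. First, the condition $DJ=0$ forces $D$ to preserve the type decomposition $TM\otimes_{\mathbb{R}}\mathbb{C}=T^{1,0}M\oplus T^{0,1}M$: writing $D_{e_\alpha}e_j=\Gamma_{\alpha j}^k e_k$ and $D_{e_\alpha}\overline{e}_j=\Gamma_{\alpha\overline{j}}^{\overline{k}}\overline{e}_k$, the mixed components $\Gamma_{\alpha j}^{\overline{k}}$ and $\Gamma_{\alpha\overline{j}}^k$ must vanish, and the reality of $D$ (which comes from $D$ being a connection on the real tangent bundle) gives the conjugation relation $\Gamma_{\overline{\alpha}\overline{j}}^{\overline{k}}=\overline{\Gamma_{\alpha j}^k}$. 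Hence the independent unknowns are exactly $\Gamma_{kj}^i$ and $\Gamma_{\overline{k}j}^i$ for $i,j,k\in\{1,\dots,n\}$.

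Next, I would read off $\Gamma_{\overline{k}j}^i$ from the prescription of the $(1,1)$ torsion. By \eqref{canonicald}, the equation $\Phi^i(e_j,\overline{e}_k)=S_{j\overline{k}}^i=-\Gamma_{\overline{k}j}^i-C_{j\overline{k}}^i$ solves uniquely for $\Gamma_{\overline{k}j}^i$ in terms of $\Phi$ and the structure constants of the frame.

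To pin down the remaining components $\Gamma_{kj}^i$, I would impose $Dg=0$. Because $g$ is Hermitian, the only nontrivial components of $g$ in the frame $\{e_i,\overline{e}_j\}$ are $g_{i\overline{j}}:=\langle e_i,\overline{e}_j\rangle$, and the equation $(D_{e_k}g)(e_i,\overline{e}_j)=0$ becomes
$$
e_k(g_{i\overline{j}})=\Gamma_{ki}^p g_{p\overline{j}}+g_{i\overline{p}}\,\overline{\Gamma_{\overline{k}j}^p}.
$$
Since the matrix $(g_{p\overline{j}})$ is invertible, this determines $\Gamma_{ki}^p$ uniquely. The corresponding equations with $\overline{e}_k$ in the first slot are obtained by complex conjugation (using the reality of $g$) and hence hold automatically, while equations with $(e_i,e_j)$ or $(\overline{e}_i,\overline{e}_j)$ in the last two slots trivialize because $g(e_i,e_j)=g(\overline{e}_i,\overline{e}_j)=0$.

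Finally, I would verify that the resulting connection has the correct torsion. By \eqref{canonicald} the $(0,2)$ component $\Theta^i(\overline{e}_j,\overline{e}_k)=N_{\overline{j}\overline{k}}^i$ depends only on $\md\theta^i$ and on the Nijenhuis tensor, not on the $\Gamma$'s, so it comes out automatically; the $(1,1)$ component equals $\Phi$ by construction; and the $(2,0)$ component $T_{jk}^i=\Gamma_{jk}^i-\Gamma_{kj}^i-C_{jk}^i$ is a bookkeeping consequence of the already-determined $\Gamma_{jk}^i$. Uniqueness is built into the derivation because at each stage a Christoffel symbol is forced by a linear equation with an invertible coefficient matrix. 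I do not anticipate a genuine obstacle here: the argument is a direct transcription of the classical Chern connection construction, the only new point being that the $(0,2)$ piece of the torsion is rigidly determined by $J$ while the $(1,1)$ piece is left as a free parameter.
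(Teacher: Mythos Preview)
Your proof is correct and follows the standard argument. The paper does not give its own proof of this lemma; it is stated with references to \cite[Theorem 2.1]{kobayashi} and \cite{gau}, and your approach is essentially the one found there: read off $\Gamma_{\overline{k}j}^i$ from the prescribed $(1,1)$ torsion via \eqref{canonicald}, then solve the metric compatibility $(D_{e_k}g)(e_i,\overline{e}_j)=0$ for $\Gamma_{ki}^p$, using invertibility of $(g_{p\overline{j}})$. Your observation that the equations $(D_{\overline{e}_k}g)(e_i,\overline{e}_j)=0$ are the complex conjugates of $(D_{e_k}g)(e_j,\overline{e}_i)=0$, and hence impose no new constraint, is exactly the point that makes the system consistent rather than overdetermined.

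One small remark: you might add a sentence noting that the formulas obtained for the Christoffel symbols are tensorial in the choice of local unitary frame (since $\Phi$, $g$, and the structure constants $C^i_{j\overline{k}}$ transform compatibly), so the locally defined connections patch to a global one. This is routine but worth mentioning for completeness.
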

If the $(1,1)$ part of the torsion of an almost Hermitian connection vanishes everywhere, then the connection is known as the \emph{second canonical connection} and was first introduced by Ehrensmann and Libermann \cite{ehrensmann}.
It is also sometimes referred to as the \emph{Chern connection}, since when $J$ is integrable it coincides with the connection defined in Chern \cite{chern}, and in Schouten and Danzig \cite{schouten}. We denote it by $\nabla^C$.

We use the local $(1,0)$ frames as before. We write $\omega$ as
$$
\omega=\mn g_{i\overline{j}}\theta^i\wedge\overline{\theta}^j.
$$
From \eqref{dj}, \eqref{canonicald} and \eqref{dg}, we get
\begin{align*}
e_i\langle e_k,\overline{e}_{\ell}\rangle
=\langle \nabla^C_{e_i}e_k,\overline{e}_{\ell}\rangle+\langle e_k,\nabla^C_{e_i}\overline{e}_{\ell}\rangle
=\langle \Gamma_{ik}^pe_p,\overline{e}_{\ell}\rangle+\langle e_k,\Gamma_{i\overline{\ell}}^{\overline{q}}\overline{e}_{q}\rangle
=\langle \Gamma_{ik}^pe_p,\overline{e}_{\ell}\rangle+\langle e_k,-\overline{C_{\ell\overline{i}}^{q}}\overline{e}_{q}\rangle,
\end{align*}
i.e.,
\begin{align}
\label{gammaikp}
\Gamma_{ik}^p=g^{\overline{\ell}p}e_ig_{k\overline{\ell}}+g^{\overline{\ell}p}g_{k\overline{q}}\overline{C_{\ell\overline{i}}^q}.
\end{align}
This gives the components of the torsion as
$$
T_{ik}^p=\Gamma_{ik}^p-\Gamma_{ki}^p-C_{ik}^p=g^{\overline{\ell}p}e_ig_{k\overline{\ell}}+g^{\overline{\ell}p}g_{k\overline{q}}\overline{C_{\ell\overline{i}}^q}
-g^{\overline{\ell}p}e_kg_{i\overline{\ell}}-g^{\overline{\ell}p}g_{i\overline{q}}\overline{C_{\ell\overline{k}}^q}-C_{i k}^p.
$$
We also lower the index of torsion and denote it by
$$
T_{ik\overline{\ell}}=T_{ik}^pg_{p\overline{\ell}}
=e_ig_{k\overline{\ell}}+g_{k\overline{q}}\overline{C_{\ell\overline{i}}^q}
-e_kg_{i\overline{\ell}}-g_{i\overline{q}}\overline{C_{\ell\overline{k}}^q}
-C_{ik}^pg_{p\overline{\ell}}.
$$
Thanks to \eqref{dbareta}, we obtain
\begin{align*}
\overline{\partial}\omega
= \frac{\mn}{2}\left(\overline{e}_jg_{k\overline{i}}-\overline{e}_ig_{k\overline{j}}
-C_{k\overline{i}}^pg_{p\overline{j}}
+C_{k\overline{j}}^{p}g_{p\overline{i}}
+\overline{C_{ij}^q}g_{k\overline{q}}\right)\theta^k\wedge\overline{\theta}^i\wedge\overline{\theta}^j
= \frac{\mn}{2}\overline{T_{ji\overline{k}}}\theta^k\wedge\overline{\theta}^i\wedge\overline{\theta}^j.
\end{align*}
By \eqref{gammaikp}, it yields that
\begin{align}
\label{gammaipp}
\Gamma_{ip}^p=e_i\left(\log\det g\right)-C_{i\overline{q}}^{\overline{q}}.
\end{align}
Using \eqref{rklji} and \eqref{gammaipp}, we can deduce that
\begin{align}
\label{rkl}
R_{k\ell}:=&R_{k \ell i}{}^i
=[e_k,e_{\ell}]^{(0,1)}\left(\log\det g\right)
-e_k\left(C_{\ell\overline{q}}^{\overline{q}}\right)
+e_{\ell}\left(C_{k\overline{q}}^{\overline{q}}\right)
+C_{k\ell}^pC_{p\overline{q}}^{\overline{q}}
+\overline{N_{\overline{k}\overline{\ell}}^p}C_{\overline{p}i}^i.
\end{align}
Combining \eqref{riccikbarl} and \eqref{gammaipp} implies
\begin{align}
\label{rkbarl}
R_{k\overline{\ell}}:=&R_{k \overline{\ell} i}{}^i
=-\left(e_k\overline{e}_{\ell}-[e_k,\overline{e}_{\ell}]^{(0,1)}\right)(\log\det g)
+\overline{e}_{\ell}\left(C_{i\overline{q}}^{\overline{q}}\right)
+e_k\left(C_{\overline{\ell} i}^i\right)
+C_{k\overline{\ell}}^pC_{p\overline{q}}^{\overline{q}}
-C_{k\overline{\ell}}^{\overline{p}}C_{\overline{p}i}^i.
\end{align}
From \eqref{rkbarlbarji} and \eqref{gammaipp}, it follows that
\begin{align}
\label{rkbarlbar}
R_{\overline{k} \overline{\ell}}:=&R_{\overline{k} \overline{\ell} i}{}^i
=-[\overline{e}_k,\overline{e}_{\ell}]^{(1,0)}\left(\log\det g\right)
-N_{\overline{k}\overline{\ell}}^pC_{p\overline{q}}^{\overline{q}}
+\overline{e}_k(C_{\overline{\ell} i}^i)
-\overline{e}_{\ell}(C_{\overline{k} i}^i)
-\overline{C_{k\ell}^{p}}C_{\overline{p}i}^i.
\end{align}
The Chern-Ricci form $\ric(\omega)$ is defined by
$$\ric(\omega):=\frac{\mn}{2}R_{k\ell}\theta^k\wedge\theta^{\ell}
+\mn R_{k\overline{\ell}}\theta^k\wedge\overline{\theta}^{\ell}
+\frac{\mn}{2}R_{\overline{k}\overline{\ell}}\overline{\theta}^k\wedge\overline{\theta}^{\ell}.
$$
It is a closed real $2$ form and furthermore is a closed real $(1,1)$ form if the complex structure is integrable. If $J$ is integrable and $\md\omega=0$, then the Chern-Ricci form coincides  exactly with the Ricci form defined by the Levi-Civita connection of $\omega$.
Assume that $\tilde \omega=\mn \tilde g_{i\overline{j}}\theta^i\wedge\overline{\theta}^j$ is another almost Hermitian metric.
From \eqref{djdvarphi20}, \eqref{djdvarphi11}, \eqref{djdvarphi02}, \eqref{rkl}, \eqref{rkbarl}
and \eqref{rkbarlbar}, it follows that (see also for example \cite[(3.16)]{twyau})
\begin{align}
\label{diffric}
\ric(\tilde\omega)-\ric(\omega)=-\frac{1}{2}\md J\md \log\frac{\tilde\omega^n}{\omega^n},
\end{align}
with $\ric(\omega)\in 2\pi c_1(M,J)\in H^2(M,\mathbb{R})$. Note that in general there exist representatives of $2\pi c_1(M,J)$ which cannot be written in the form $\ric(\omega)-\frac{1}{2}\md J\md F$ for any $\omega$ and $F$ even when $J$ is integrable (see for example \cite[Corollary 2]{TW1}).

The Chern scalar curvature $R$ is defined by
$$
R:=\tr_{\omega}\ric(\omega)=\tr_{\omega}\cric(\omega)=\frac{n\ric(\omega)\wedge\omega^{n-1}}{\omega^n}=\frac{n\cric(\omega)\wedge\omega^{n-1}}{\omega^n}.
$$
For any $\varphi\in C^{\infty}(M,\mathbb{R})$,  we define the canonical Laplacian by
\begin{align*}
\Delta^C_{\omega}\varphi:=\frac{n\ddbar\varphi\wedge\omega^{n-1}}{\omega^n}=\frac{n(\md J\md\varphi)\wedge\omega^{n-1}}{2\omega^n}
=g^{\overline{j}i}
\left(e_i\overline{e}_j(\varphi)-[e_i,\overline{e}_j]^{(0,1)}(\varphi)\right).
\end{align*}
Using the second canonical connection $\nabla^C$, it can also be rewritten as
$$\Delta_{\omega}^C\varphi=g^{\overline{j}i}\nabla^C_i\nabla^C_{\overline{j}}\varphi=g^{\overline{j}i}\nabla^C_{\overline{j}}\nabla^C_i\varphi$$
since the $(1,1)$ part of the torsion of $\nabla^C$ vanishes. Denote by $\Delta_g$ the Laplace-Beltrami operator of the Riemannian metric $g$. For these two different Laplace operators, we have (see for example \cite[Lemma 3.2]{To0})
\begin{align}
\label{2laplace}
\Delta_g \varphi=2\Delta_{\omega}^C\varphi+\tau(\md \varphi),
\end{align}
where
$$
\tau(\md \varphi)=2\mathrm{Re}\left(T_{pj}^jg^{\overline{q}p}\overline{e}_q(\varphi)\right).
$$
Given any volume form $\Omega$, there exists an almost Hermitian metric (not unique) $\omega'$ such that $\Omega=\omega'^n$ since we have $f:=\log\frac{\Omega}{\omega^n} \in C^{\infty}(M,\mathbb{R})$ and we can take $\omega'=e^{f/n}\omega$ for example. Hence, from \eqref{rkl}, \eqref{rkbarl}
and \eqref{rkbarlbar}, it follows that we can also define the Ricci form $\ric(\Omega)$ associated to $\Omega$ by replacing $\det g$ with $\Omega$ in \eqref{rkl}, \eqref{rkbarl}
and \eqref{rkbarlbar} when it occurs, and also have
\begin{align}
\label{omegaOmega}
\ric(\omega)-\ric(\Omega)=-\frac{1}{2}\md J\md\log\frac{\omega^n}{\Omega}.
\end{align}
\section{Preliminary estimates}
\label{sec3}
In this section, we give the estimates of $\varphi$ and $\dot\varphi:=\ppt\varphi$. For this aim, we need to prove that the flow \eqref{acrf} can be reduced to a parabolic Monge-Amp\`ere equation.
Fix $T_0<T$ and in particular $T_0<\infty$. By definition of $T$, we can define reference metrics $\hat\omega_t$ for $M\times[0,T_0]$ by
\begin{align*}
\hat\omega_t:=\alpha_t+\frac{t}{T_0}\ddbar\phi_{T_0}=\frac{T_0-t}{T_0}\omega_0+\frac{t}{T_0}\left(\alpha_{T_0}+\ddbar\phi_{T_0}\right)=:\mn\hat g_{i\overline{j}}\theta^i\wedge\overline{\theta}^j,
\end{align*}
where $\phi_{T_0}\in C^{\infty}(M,\mathbb{R})$  satisfies $\alpha_{T_0}+\ddbar\phi_{T_0}>0$. Note that the almost Hermitian metrics $\hat\omega_t$ vary smoothly on the compact interval $[0,T_0]$ and hence we can deduce uniform estimates on $\hat\omega_t$ for $t\in [0,T_0]$. We rewrite $\hat\omega_t$ as $\hat\omega_t=\omega_0+t\chi$ with
$$\chi=\frac{1}{T_0}\ddbar\phi_{T_0}-\cric(\omega_0).$$
We define a volume form $\Omega=\omega_0^ne^{\phi_{T_0}/T_0}$. Note that
$$\ddbar\log\Omega=\frac{1}{T_0}\ddbar\phi_{T_0}+\ddbar\log\omega_0^n\not=\chi=\frac{\partial\hat\omega_t}{\partial t}$$
and
\begin{align}
\label{hatomegaomega0}
C_0^{-1}\omega_0\leq \hat\omega_t\leq C_0\omega_0
\end{align}
for some uniform constant $C_0>0$.
\begin{lem}
\label{dengjiadingyi}
A smooth family $\omega(t)$ of almost Hermitian metrics on $[0,T_0)$ solves the flow \eqref{acrf} if and only if there is a family of smooth functions $\varphi(t)$ for $t\in [0,T_0)$ such that $\omega(t)=\hat\omega_t+\ddbar\varphi(t)$, and solve
\begin{align}
\label{apcma}
\frac{\partial}{\partial t}\varphi=\log\frac{\left(\hat\omega_t+\ddbar\varphi\right)^n}{\Omega},\quad \hat\omega_t+\ddbar\varphi>0,\quad \varphi|_{t=0}=0.
\end{align}
\end{lem}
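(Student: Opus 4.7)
My plan is to establish the equivalence by a direct computation whose core is the scalar-to-Ricci bridge obtained from \eqref{omegaOmega}: taking the $(1,1)$ part of that identity yields, for any almost Hermitian metric $\omega$ and any volume form $\Omega$,
\begin{equation*}
\cric(\omega) - \cric(\Omega) = -\ddbar \log \frac{\omega^n}{\Omega}.
\end{equation*}
The preparatory observation is that the reference family $\hat\omega_t$ and the volume form $\Omega = \omega_0^n e^{\phi_{T_0}/T_0}$ have been engineered precisely so that this identity applied to $\omega_0$ gives $\cric(\Omega) = \cric(\omega_0) - \tfrac{1}{T_0}\ddbar \phi_{T_0} = -\chi$; equivalently, $\partial_t \hat\omega_t + \cric(\Omega) \equiv 0$.

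For the ``if'' direction, I would assume $\varphi$ solves \eqref{apcma} and set $\omega(t) := \hat\omega_t + \ddbar \varphi(t)$; positivity and $\omega(0) = \omega_0$ are automatic from $\hat\omega_0 = \omega_0$ and $\varphi(0) = 0$. Differentiating in $t$, substituting $\dot\varphi = \log(\omega(t)^n/\Omega)$, and invoking the bridge identity on the resulting $\ddbar \dot\varphi$ term produces
\begin{equation*}
\frac{\partial \omega(t)}{\partial t} = \chi + \ddbar \log \frac{\omega(t)^n}{\Omega} = \chi + \cric(\Omega) - \cric(\omega(t)) = -\cric(\omega(t)),
\end{equation*}
so $\omega(t)$ solves \eqref{acrf}.

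Conversely, given a solution $\omega(t)$ to \eqref{acrf}, I would define the candidate potential $\varphi(t) := \int_0^t \log(\omega(s)^n/\Omega)\,\md s$ and then show that $\eta(t) := \omega(t) - \hat\omega_t - \ddbar \varphi(t)$ vanishes identically. A direct differentiation using the flow and the bridge identity gives $\partial_t \eta = -\cric(\omega(t)) - \chi - \cric(\Omega) + \cric(\omega(t)) = 0$, while $\eta(0) = \omega_0 - \omega_0 - 0 = 0$ by construction; hence $\omega(t) = \hat\omega_t + \ddbar\varphi(t)$, and the relation $\dot\varphi = \log(\omega(t)^n/\Omega)$ is precisely \eqref{apcma}.

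The only real subtlety — everything else is bookkeeping — is that because $J$ is not assumed integrable, the full Chern-Ricci form $\ric(\omega)$ carries non-trivial $(2,0)$ and $(0,2)$ components; however the flow \eqref{acrf} is driven only by its $(1,1)$ part $\cric(\omega)$, and $\ddbar\varphi$ is a $(1,1)$ form, so taking the $(1,1)$ part of \eqref{omegaOmega} is exactly what matches both sides of the computation and closes the equivalence.
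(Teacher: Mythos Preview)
Your proof is correct and follows essentially the same route as the paper's. The only cosmetic difference is that you isolate the identity $\cric(\Omega)=-\chi$ (equivalently $\partial_t\hat\omega_t+\cric(\Omega)=0$) once at the outset and invoke \eqref{omegaOmega} directly between $\omega(t)$ and $\Omega$, whereas the paper unwinds $\Omega=\omega_0^n e^{\phi_{T_0}/T_0}$ and passes through $\omega_0$ via \eqref{diffric}; the underlying computation and the definition of $\varphi(t)$ in the converse direction are identical.
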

\begin{proof}
We use the ideas from for example \cite{tosattikawa}.
For the ``if" direction, we set $\omega(t)=\hat\omega_t+\ddbar\varphi(t)$. From \eqref{diffric}, it follows that
\begin{align*}
\ppt\omega=
&\ppt\hat\omega_t+\ddbar\left(\ppt\varphi\right)\\
=&\frac{1}{T_0}\ddbar\phi_{T_0}-\cric(\omega_0)-\frac{1}{T_0}\ddbar\phi_{T_0}+\ddbar\log\frac{\omega^n}{\omega_0^n}\\
=&\frac{1}{T_0}\ddbar\phi_{T_0}-\cric(\omega_0)-\frac{1}{T_0}\ddbar\phi_{T_0}-\cric(\omega)+\cric(\omega_0),
\end{align*}
as required.

For the `only if' direction, assume that $\omega$ solves the flow \eqref{acrf} on  $[0,T_0)$. We define
$$
\varphi(t)=\int_{0}^t\log\frac{\omega(s)^n}{\Omega}\md s
$$
for $t\in [0,T_0)$. We have
\begin{align}
\label{gai1}
\ppt\varphi(t)=\log\frac{\omega(t)^n}{\Omega},\quad \varphi(0)=0.
\end{align}
On the other hand, by \eqref{diffric}, we can deduce
\begin{align}
\label{gai2}
\ppt\left(\omega-\hat\omega_t\right)
=-\cric(\omega)-\chi=\ddbar\left(\log\frac{\omega^n}{\omega_0^n}-\frac{\phi_{T_0}}{T_0}\right)=\ddbar\log\frac{\omega^n}{\Omega}.
\end{align}
Thanks to \eqref{gai1} and \eqref{gai2}, it follows that
$$
\ppt \left(\omega-\hat\omega_t-\ddbar\varphi\right)=0,\quad\mbox{with}\quad
\left(\omega-\hat\omega_t-\ddbar\varphi\right)|_{t=0}=0
$$
so that $\omega=\hat\omega_t+\ddbar\varphi$ and $\varphi$ is the solution to \eqref{apcma}.
\end{proof}

Standard parabolic theory of partial differential equation yields that there exists a unique maximal solution to \eqref{apcma} on some time interval $[0,\tmax)$ with $\tmax>0$. Assume for a contradiction that $\tmax<T_0$.

Now we prove uniform estimates for the solution $\varphi$ to \eqref{apcma} up to the maximal time.
For later use, we write
\begin{align*}
\omega(t)=\mn g_{i\overline{j}}\theta^i\wedge\overline{\theta}^j,\quad \omega_0=\mn (g_0)_{i\overline{j}}\theta^i\wedge\overline{\theta}^j.
\end{align*}
\begin{lem}
\label{lemc0}
Assume that $\varphi(t)$ is the solution to the flow \eqref{apcma} on $[0,\tmax)$.
There exists a positive uniform constant $C>0$, independent of $t\in [0,\tmax)$, such that
\begin{enumerate}
\item \label{c0}$\|\varphi(t)\|_{C^0}\leq C$.
\item \label{dotvarphi} $\|\dot{\varphi}(t)\|_{C^0}\leq C$.
\item \label{volumequi}$C^{-1}\omega_0^{n}\leq \omega^n\leq C\omega_0^n$.
\end{enumerate}
\end{lem}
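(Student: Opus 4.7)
The three parts are naturally coupled: (1) feeds into (2), which feeds into (3). My plan is to attack each by the parabolic maximum principle applied to a carefully chosen auxiliary function built from $\varphi$ and $\dot\varphi:=\ppt\varphi$. Writing $\mathcal{L}:=\ppt-\Delta^C_\omega$, the key preliminary identities, both obtained directly from \eqref{apcma} and $\ddbar\varphi=\omega-\hat\omega_t$, are
\begin{align*}
\mathcal{L}\varphi=\dot\varphi-n+\tr_\omega\hat\omega_t,\qquad\mathcal{L}\dot\varphi=\tr_\omega\chi.
\end{align*}
All maximum principle arguments will be run on $M\times[0,\tau]$ for arbitrary $\tau<\tmax$, producing constants that may depend on $T_0$, $\tmax$ and the initial data but not on $t$.

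For (1), I would apply the maximum principle to $\varphi-At$ and $\varphi+At$ with $A$ large. At a space-time maximum of $\varphi-At$ with $t>0$, one has $\ddbar\varphi\le0$, so $\omega\le\hat\omega_t\le C_0\omega_0$ by \eqref{hatomegaomega0}, and the equation forces $\dot\varphi$ bounded above by a constant $C_1$ depending only on the data; but the same maximum condition forces $\dot\varphi\ge A$. Choosing $A>C_1$ rules this out, so the maximum occurs at $t=0$ and one reads off $\varphi\le AT_0$, with $\varphi\ge-AT_0$ from the symmetric argument. For the upper bound on $\dot\varphi$, I would take $Q_+:=\dot\varphi-A\varphi$ with $A$ large enough that the real $(1,1)$-form $A\hat\omega_t-\chi$ is uniformly positive on $[0,T_0]$ (possible since $\hat\omega_t$ is uniformly comparable to $\omega_0$ by \eqref{hatomegaomega0} and $\chi$ is a fixed smooth form). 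A short calculation then gives
\begin{align*}
\mathcal{L}Q_+=\tr_\omega(\chi-A\hat\omega_t)-A\dot\varphi+An\le -A(\dot\varphi-n),
\end{align*}
so at a maximum $\dot\varphi\le n$, and combining with (1) yields $\dot\varphi\le C$ everywhere.

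The lower bound on $\dot\varphi$ is the most delicate step and is, in fact, the main reason the reference path $\hat\omega_t$ was designed so that $\hat\omega_{T_0}$ is strictly positive. Exploiting the telescoping identity $(T_0-t)\chi+\hat\omega_t=\hat\omega_{T_0}$, I would set $Q_-:=(T_0-t)\dot\varphi+\varphi+nt$ and compute
\begin{align*}
\mathcal{L}Q_-=(T_0-t)\tr_\omega\chi+\tr_\omega\hat\omega_t=\tr_\omega\hat\omega_{T_0}>0,
\end{align*}
forcing the minimum of $Q_-$ to be attained at $t=0$, where $Q_-(0)=T_0\dot\varphi(0)=-\phi_{T_0}$ is bounded. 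Together with (1) this gives $(T_0-t)\dot\varphi\ge -C$, which is finite on $[0,\tmax]$ precisely because $\tmax<T_0$. Finally (3) is immediate from $\omega^n=e^{\dot\varphi}\Omega$, the uniform bound on $\dot\varphi$, and the defining relation $\Omega=e^{\phi_{T_0}/T_0}\omega_0^n$. The main obstacle is the weighting in $Q_-$: the factor $(T_0-t)$ is forced in order to assemble the positive form $\hat\omega_{T_0}$, but the resulting bound degenerates as $t\to T_0^-$, which is exactly why the whole setup requires the strict inequality $\tmax<T_0$ and is precisely what will drive the contradiction through the extension argument in the sections that follow.
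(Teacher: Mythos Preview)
Your proof is correct and follows essentially the same maximum-principle approach as the paper: identical arguments for part~(1), for the lower bound on $\dot\varphi$ via $Q_-=(T_0-t)\dot\varphi+\varphi+nt$, and for part~(3). The only cosmetic difference is in the upper bound for $\dot\varphi$, where the paper uses $Q_1=t\dot\varphi-\varphi-nt$ (exploiting the exact identity $\hat\omega_t-t\chi=\omega_0$) instead of your $Q_+=\dot\varphi-A\varphi$ with $A$ large; both are standard and equally valid.
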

\begin{proof}
We use the ideas from \cite{TZ,twjdg}.
For  Part \eqref{c0}, set $\psi=\varphi-At$ for a constant $A>0$ to be determined later. Fix any $T'\in (0,\tmax)$ and assume that $\psi$ attains a maximum
on $M\times [0,T']$ at a point $(x_0,t_0)$ with $t_0>0$. At this point,  the maximum principle yields
$$
0\leq \ppt \psi=\log\frac{\left(\hat\omega_t+\ddbar\psi\right)^n}{\Omega}-A\leq \log\frac{\hat\omega_t^n}{\Omega}-A<0,
$$
provided $A$ is chosen sufficiently large, a contradiction. Here we use the fact that $\hat\omega_t$ is a smooth family of metrics on $[0,\tmax]$. Since $T'\in (0,\tmax)$ is arbitrary, this yields that the maximum of $\psi(t)$ is achieved at $t=0$. We get the upper bound for $\psi$ and hence for $\varphi$. The lower  bound is proved similarly.

As for Part \eqref{dotvarphi}, we first consider the upper bound for $\dot{\varphi}$. We consider the quantity $Q_1=t\dot\varphi-\varphi-nt$ and have
\begin{align*}
\left(\ppt-\Delta_{\omega}^{C}\right)Q_1
=t\tr_{\omega}\chi-n+\tr_{\omega}(\omega-\hat\omega_t)=\tr_{\omega}(t\chi-\hat\omega_t)=-\tr_{\omega}\omega_0<0,
\end{align*}
where
we use the fact that $\ppt\dot\varphi=\Delta_{\omega}^C\dot\varphi+\tr_{\omega}\chi$.
This, together with the maximum principle, implies that upper bound for $Q_1$ and hence for $\dot\varphi$.

For the lower bound for $\dot\varphi$, we define
$Q_2:=(T_0-t)\dot\varphi+\varphi+nt$ and have
\begin{align*}
\left(\ppt-\Delta_{\omega}^{C}\right)Q_2
=(T_0-t)\tr_{\omega}\chi+n-\Delta_{\omega}^C\varphi=\tr_{\omega}\left(\hat\omega_t+(T_0-t)\chi\right)=\tr_{\omega}\hat\omega_{T_0}>0.
\end{align*}
Then the lower bound for $Q_2$ and hence for $\dot\varphi$ follows from the maximum principle and the fact that $\tmax<T_0$.

Part \eqref{volumequi} follows from  Part \eqref{dotvarphi}, \eqref{apcma} and the definition of $\Omega$.
\end{proof}
\section{First order estimate}
\label{sec4}
In this section, we give the first order estimate of the solution $\varphi$ to \eqref{apcma}.
\begin{thm}
\label{thm1order}
Assume that $\varphi(t)$ is the solution to the flow \eqref{apcma} on $[0,\tmax)$. There exists a uniform constant $C>1$, independent of $t\in [0,\tmax)$, such that
\begin{align}
\sup\limits_{M\times[0,\tmax)}|\partial \varphi|_{g_0} \leq C.
\end{align}
\end{thm}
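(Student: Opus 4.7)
The natural strategy is a maximum principle argument applied to an auxiliary function of the form
\[
Q := \log\bigl(|\partial \varphi|^2_{g_0}\bigr) - A\varphi,
\]
for a constant $A > 1$ to be chosen large enough depending only on the fixed background data $(\omega_0, J, \phi_{T_0})$. Since $\varphi$ is uniformly bounded by Lemma \ref{lemc0}, any upper bound on $Q$ immediately gives the desired bound on $|\partial \varphi|_{g_0}$. Set $u := |\partial \varphi|^2_{g_0} = g_0^{\overline{j}i} e_i(\varphi) \overline{e_j(\varphi)}$; we may assume $u(x_0,t_0) > 0$ at the maximum point of $Q$ on $M \times [0,T']$, for an arbitrarily fixed $T' \in (0, \tmax)$, with $t_0 > 0$, else the estimate is trivial. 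I would work in a local frame $\{e_i\}$ that is $g$-unitary and simultaneously diagonalizes $g_0$ at $(x_0,t_0)$, and compute $\left(\ppt - \Delta^C_\omega\right) Q$ at this point.

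Differentiating \eqref{apcma} yields $\ppt u = 2\,\mathrm{Re}\bigl(g_0^{\overline{j}i} e_i(\dot\varphi)\overline{e_j(\varphi)}\bigr)$, while expanding $\Delta^C_\omega u$ via the Chern connection of $g$ produces, after commuting covariant derivatives past each other and past the fixed metric $g_0$, two non-negative quadratic terms in the second-order derivatives $\nabla^C_i \nabla^C_k \varphi$ and $\nabla^C_{\overline{j}} \nabla^C_k \varphi$, plus geometric error terms involving the Chern curvature and torsion of $g_0$, the torsion of $g$, and the Nijenhuis tensor $\N$. The latter are uniformly controlled in terms of $\omega_0, J$, and are at worst linear in $|\nabla^C \partial \varphi|_g$ with one factor $|\partial\varphi|_{g_0}$ and coefficient $O(\tr_\omega \omega_0)$. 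The critical point relation $\partial u = A u\,\partial \varphi$ at $(x_0,t_0)$ enables Cauchy-Schwarz absorption of the cross-terms involving $\nabla^C \partial \varphi$ into the good quadratic terms, at the cost of an extra $A^2 u \cdot \tr_\omega \omega_0$ contribution that is small relative to the saving term below.

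The crucial favorable term comes from $-A \Delta^C_\omega \varphi = -A(n - \tr_\omega \hat\omega_t)$, which by \eqref{hatomegaomega0} is bounded below by $C_0^{-1}A\,\tr_\omega \omega_0 - An$. Using the $C^0$ bound on $\dot\varphi$ from Lemma \ref{lemc0}, $0 \leq \left(\ppt - \Delta^C_\omega\right)Q$ at $(x_0,t_0)$, and dividing through by $u$, one obtains an inequality of schematic form
\[
C_0^{-1} A\,\tr_\omega \omega_0 \;\leq\; C\bigl(1 + \tr_\omega \omega_0\bigr) + \tfrac{1}{2} C_0^{-1} A\,\tr_\omega \omega_0,
\]
provided $u(x_0,t_0)$ is not already bounded by an absolute constant. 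Choosing $A$ large enough forces a contradiction unless $\tr_\omega \omega_0$ is bounded at $(x_0,t_0)$, and back-substituting one sees that $u(x_0,t_0)$ itself must be bounded. Combined with the $C^0$ bound on $\varphi$ this controls $Q$ everywhere, yielding the theorem. \textbf{The main obstacle} is precisely the systematic appearance of the Nijenhuis tensor and the $(2,0)/(0,2)$ pieces of the torsion of $\nabla^C$ whenever one commutes Chern-covariant derivatives; one has to bookkeep carefully so that each such error term is at most linear in $\nabla^C\partial\varphi$ and in $|\partial\varphi|_{g_0}$ with coefficient controlled by $\tr_\omega \omega_0$, so that the dominant term $A\,\tr_\omega \omega_0$ coming from $-A\Delta^C_\omega \varphi$ absorbs them. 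This is the step where the almost complex setting is substantively more delicate than the Hermitian case treated in \cite{twjdg}.
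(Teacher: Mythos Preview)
Your proposal has a genuine gap at the step where you assert that the ``extra $A^2 u\cdot\tr_\omega\omega_0$ contribution'' is small relative to the saving term $C_0^{-1}A\,\tr_\omega\omega_0$. At the maximum of $Q=\log u-A\varphi$ the critical point relation gives $|\partial u|_g^2/u^2=A^2|\partial\varphi|_g^2$, and since $|\partial\varphi|_g^2\leq u\,\tr_\omega\omega_0$ this bad term is of order $A^2u\,\tr_\omega\omega_0$, which dominates $A\,\tr_\omega\omega_0$ precisely when $u$ is large. No choice of $A$ closes the loop: increasing $A$ only widens the discrepancy by an extra factor of $A$. The linear weight $-A\varphi$ (equivalently $f''=0$) is simply too weak in the almost complex setting, and this is not a bookkeeping issue with the Nijenhuis errors---those you have identified correctly and they \emph{are} absorbed by $A\,\tr_\omega\omega_0$ after dividing by $u$. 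The obstruction is the second-order term coming from the logarithm itself.

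The paper, following \cite{ctw}, instead takes $Q=e^{f(\varphi)}|\partial\varphi|_{g_0}^2$ with the \emph{strictly convex} weight $f(s)=\frac{1}{A}e^{-A(s-\varphi_0-1)}$, and chooses the Cauchy--Schwarz parameter $\varepsilon$ depending on the point so that $f''-3\varepsilon(f')^2>0$. This manufactures a genuinely positive term $C^{-1}|\partial\varphi|_{g_0}^2\,|\partial\varphi|_g^2$ in the final inequality \eqref{1order7}, in addition to $C^{-1}|\partial\varphi|_{g_0}^2\,\tr_\omega\omega_0$. The combination of these two good terms with the volume bound of Lemma~\ref{lemc0} forces first $\tr_\omega\omega_0\leq C$ and $|\partial\varphi|_g^2\leq C$ at the maximum, and then $u=|\partial\varphi|_{g_0}^2\leq C$ since $\omega$ and $\omega_0$ are comparable there. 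Your proposed ``back-substitution'' cannot recover the bound on $u$ without this extra $|\partial\varphi|_g^2$ term: once $\tr_\omega\omega_0$ is bounded, the remaining good Hessian-squared terms divided by $u$ are only $O(1)$ and do not force $u$ small.
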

\begin{proof}
We consider the quantity $Q:=e^{f(\varphi)}|\partial \varphi|_{g_0}^2$, where $f$ will be determined later.
Fix any $T'\in (0,\tmax)$ and assume that
$$
\sup\limits_{M\times[0,T']}Q=Q(x_0,t_0)
$$
with $t_0>0$. Since $g_0$ is almost Hermitian metric, we choose $g_0$-unitary frame $e_1,\cdots,e_n$ such that $g(x_0,t_0)$ is diagonal near $x_0$. At
the point $(x_0,t_0)$, the maximum principle yields
\begin{align}
\label{evolutionQ}
0\geq&\left(\Delta_{\omega}^C-\ppt\right)Q\\
=&e^f\left(\Delta_{\omega}^C-\ppt\right)|\partial \varphi|_{g_0}^2+|\partial \varphi|_{g_0}^2\left(\Delta_{\omega}^C-\ppt\right)e^f
 +2\mathrm{Re}\left(g^{\overline{i}i}e_i\left(|\partial \varphi|_{g_0}^2\right)\overline{e}_i(e^f)\right).\nonumber
\end{align}
Since $\omega=\hat\omega_t+\ddbar\varphi$, at $(x_0,t_0)$, a direct computation gives
\begin{align}
\label{lapef}
\Delta_{\omega}^C(e^f)
=&g^{\overline{i}i}\left(e_i\overline{e}_i\left(e^f\right)-[e_i,\overline{e}_i]^{(0,1)}\left(e^f\right)\right)\\
=&g^{\overline{i}i}\left(e^f(f')^2|e_i(\varphi)|^2+e^ff''|e_i(\varphi)|^2+e^ff'e_i\overline{e}_i(\varphi)
-e^ff'[e_i,\overline{e}_i]^{(0,1)}(\varphi)\right)\nonumber\\
=&g^{\overline{i}i}e^f\left((f')^2+f''\right)|e_i(\varphi)|^2+e^ff'\tr_{\omega}(\omega-\hat\omega_t)\nonumber\\
=&g^{\overline{i}i}e^f\left((f')^2+f''\right)|e_i(\varphi)|^2+ne^ff'-e^ff'\left(\tr_{\omega}\hat\omega_t\right)\nonumber
\end{align}
and
\begin{align}
\label{lappartialu}
\Delta^C_{\omega}(|\partial\varphi|_{g_0}^2)
=&g^{\overline{i}i}\left(e_i\overline{e}_i(e_k(\varphi)\overline{e}_k(\varphi))-[e_i,\overline{e}_i]^{(0,1)}(e_k(\varphi)\overline{e}_k(\varphi))\right)\\
=&g^{\overline{i}i}\left(|e_ie_k(\varphi)|^2+|e_i\overline{e}_k(\varphi)|^2+e_i\overline{e}_ie_k(\varphi)\overline{e}_k(\varphi)
+e_k(\varphi)e_i\overline{e}_i\overline{e}_k(\varphi)\right)\nonumber\\
&-g^{\overline{i}i}\left([e_i,\overline{e}_i]^{(0,1)}e_k(\varphi)\overline{e}_k(\varphi)+e_k(\varphi)
[e_i,\overline{e}_i]^{(0,1)}\overline{e}_k(\varphi)\right)\nonumber\\
=&g^{\overline{i}i}\left(|e_ie_k(\varphi)|^2+|e_i\overline{e}_k(\varphi)|^2\right)
+g^{\overline{i}i}\left(e_k\left(e_i\overline{e}_i(\varphi)-[e_i,\overline{e}_i]^{(0,1)}(\varphi)\right)\overline{e}_k(\varphi)\right)\nonumber\\
&+g^{\overline{i}i}e_k(\varphi)\left(\overline{e}_k\left(e_i\overline{e}_i(\varphi)-[e_i,\overline{e}_i]^{(0,1)}(\varphi)\right)\right)
-g^{\overline{i}i}[e_k,e_i]\overline{e}_i(\varphi)\overline{e}_k(\varphi)\nonumber\\
&-g^{\overline{i}i}e_i[e_k,\overline{e}_i](\varphi)\overline{e}_k(\varphi)
-g^{\overline{i}i}e_k(\varphi)[\overline{e}_k,e_i]\overline{e}_i(\varphi)
-g^{\overline{i}i}e_k(\varphi)e_i[\overline{e}_k,\overline{e}_i](\varphi)\nonumber\\
&-g^{\overline{i}i}\left[e_k,[e_i,\overline{e}_i]^{(0,1)}\right](\varphi)\overline{e}_k(\varphi)
-g^{\overline{i}i}e_k(\varphi)\left[\overline{e}_k,[e_i,\overline{e}_i]^{(0,1)}\right](\varphi)\nonumber\\
=&g^{\overline{i}i}\left(|e_ie_k(\varphi)|^2+|e_i\overline{e}_k(\varphi)|^2\right)
+g^{\overline{i}i}\left(e_k\left(g_{i\overline{i}}-\hat g_{i\overline{i}}\right)\overline{e}_k(\varphi)\right)\nonumber\\
&+g^{\overline{i}i}e_k(\varphi)\left(\overline{e}_k\left(g_{i\overline{i}}-\hat g_{i\overline{i}}\right)\right)
-g^{\overline{i}i}[e_k,e_i]\overline{e}_i(\varphi)\overline{e}_k(\varphi)\nonumber\\
&-g^{\overline{i}i}e_i[e_k,\overline{e}_i](\varphi)\overline{e}_k(\varphi)
-g^{\overline{i}i}e_k(\varphi)[\overline{e}_k,e_i]\overline{e}_i(\varphi)
-g^{\overline{i}i}e_k(\varphi)e_i[\overline{e}_k,\overline{e}_i](\varphi)\nonumber\\
&-g^{\overline{i}i}\left[e_k,[e_i,\overline{e}_i]^{(0,1)}\right](\varphi)\overline{e}_k(\varphi)
-g^{\overline{i}i}e_k(\varphi)\left[\overline{e}_k,[e_i,\overline{e}_i]^{(0,1)}\right](\varphi).\nonumber
\end{align}
On the other hand, we have, at $(x_0,t_0)$,
\begin{align}
\label{pptu}
\ppt|\partial\varphi|_{g_0}^2
=&e_k(\dot\varphi)\overline{e}_k(\varphi)+e_k(\varphi)\overline{e}_k(\dot\varphi)\\
=&g^{\overline{i}i}e_k(g_{i\overline{i}})\overline{e}_k(\varphi)
-e_k(\log\Omega)\overline{e}_k(\varphi)
+g^{\overline{i}i}e_k(\varphi)\overline{e}_k(g_{i\overline{i}})
-e_k(\varphi)\overline{e}_k(\log\Omega),\nonumber
\end{align}
where we use \eqref{apcma}. At $(x_0,t_0)$, from \eqref{lappartialu} and \eqref{pptu}, it follows that
\begin{align}
\label{evolutionpartialvarphi}
\left(\Delta^C_{\omega}-\ppt\right)|\partial\varphi|_{g_0}^2
=&g^{\overline{i}i}\left(|e_ie_k(\varphi)|^2+|e_i\overline{e}_k(\varphi)|^2\right)
-g^{\overline{i}i}e_k \left(\hat g_{i\overline{i}}\right)\overline{e}_k(\varphi)
-g^{\overline{i}i}e_k(\varphi)\overline{e}_k\left(\hat g_{i\overline{i}}\right) \\
&-g^{\overline{i}i}[e_k,e_i]\overline{e}_i(\varphi)\overline{e}_k(\varphi)
-g^{\overline{i}i}e_i[e_k,\overline{e}_i](\varphi)\overline{e}_k(\varphi)
-g^{\overline{i}i}e_k(\varphi)[\overline{e}_k,e_i]\overline{e}_i(\varphi)\nonumber\\
&-g^{\overline{i}i}e_k(\varphi)e_i[\overline{e}_k,\overline{e}_i](\varphi)
-g^{\overline{i}i}\left[e_k,[e_i,\overline{e}_i]^{(0,1)}\right](\varphi)\overline{e}_k(\varphi)\nonumber\\
&-g^{\overline{i}i}e_k(\varphi)\left[\overline{e}_k,[e_i,\overline{e}_i]^{(0,1)}\right](\varphi)
+2\mathrm{Re}\left(e_k(\log\Omega)\overline{e}_k(\varphi)\right)\nonumber\\
\geq&(1-\varepsilon)g^{\overline{i}i}\left(|e_ie_k(\varphi)|^2+|e_i\overline{e}_k(\varphi)|^2\right)\nonumber\\
&-\frac{C}{\varepsilon}|\partial \varphi|_{g_0}^2\sum\limits_{i}g^{\overline{i}i}
+2\mathrm{Re}\left(e_k(\log\Omega)\overline{e}_k(\varphi)\right),\nonumber
\end{align}
where for the inequality we use the Cauchy-Schwarz inequality and $\varepsilon \in(0,1/2]$ and without loss of generality we assume $|\partial \varphi|_{g_0}^2(x_0,t_0)>1$.
At $(x_0,t_0)$, a direct calculation implies
\begin{align}
\label{realpart}
&2\mathrm{Re}\left(g^{\overline{i}i}e_i\left(|\partial \varphi|_{g_0}^2\right)\overline{e}_i(e^f)\right)\\
=&2\mathrm{Re}\left(e^ff'g^{\overline{i}i}e_k(\varphi)e_i\overline{e}_k(\varphi)\overline{e}_i(\varphi)\right)
+2\mathrm{Re}\left(e^ff'g^{\overline{i}i}e_ie_k(\varphi)\overline{e}_k(\varphi)\overline{e}_i(\varphi)\right)\nonumber\\
=&2\mathrm{Re}\left(e^ff'g^{\overline{i}i}e_k(\varphi)\left(g_{i\overline{k}}-\hat g_{i\overline{k}}+[e_i,\overline{e}_k]^{(0,1)}(\varphi)\right) \overline{e}_i(\varphi)\right)\nonumber\\
&+2\mathrm{Re}\left(e^ff'g^{\overline{i}i}e_ie_k(\varphi)\overline{e}_k(\varphi)\overline{e}_i(\varphi)\right)\nonumber\\
\geq&2 e^ff'|\partial\varphi|_{g_0}^2
-2\mathrm{Re}\left(e^ff'g^{\overline{i}i}\hat g_{i\overline{k}}e_k(\varphi)\overline{e}_i(\varphi)\right)
-\varepsilon e^f(f')^2|\partial\varphi|_{g}^2|\partial\varphi|_{g_0}^2
-\frac{Ce^f}{\varepsilon}|\partial\varphi|_{g_0}^2\sum\limits_{i}g^{\overline{i}i}\nonumber\\
&-(1+2\varepsilon)e^f(f')^2|\partial\varphi|_{g_0}^2|\partial\varphi|_{g}^2-(1-\varepsilon)e^f\sum\limits_{i}g^{\overline{i}i}|e_ie_k(\varphi)|^2\nonumber\\
=&2 e^ff'|\partial\varphi|_{g_0}^2
-2\mathrm{Re}\left(e^ff'g^{\overline{i}i}\hat g_{i\overline{k}}e_k(\varphi)\overline{e}_i(\varphi)\right)
-\frac{Ce^f}{\varepsilon}|\partial\varphi|_{g_0}^2\sum\limits_{i}g^{\overline{i}i}\nonumber\\
&-(1+3\varepsilon)e^f(f')^2|\partial\varphi|_{g_0}^2|\partial\varphi|_{g}^2-(1-\varepsilon)e^f\sum\limits_{i}g^{\overline{i}i}|e_ie_k(\varphi)|^2,\nonumber
\end{align}
where for the inequality we use the Cauchy-Schwarz inequality and $\varepsilon\in(0,1/2]$. At $(x_0,t_0)$, from \eqref{apcma} and \eqref{lapef}, it follows that
\begin{align}
\label{evolutionef}
\left(\Delta_{\omega}^C-\ppt\right)e^f
=&e^f\left(f''+(f')^2\right)|\partial\varphi|_{g}^2+e^ff'\left(\Delta_{\omega}^C-\ppt\right)\varphi\\
=&e^f\left(f''+(f')^2\right)|\partial\varphi|_{g}^2+ne^ff'-e^ff'\left(\tr_{\omega}\hat\omega_t\right)-e^ff'\left(\ppt\varphi\right).\nonumber
\end{align}
From \eqref{evolutionQ}, \eqref{evolutionpartialvarphi}, \eqref{realpart} and \eqref{evolutionef}, it yields that, at $(x_0,t_0)$,
\begin{align}
\label{1oder1}
0\geq&e^f\left(f''-3\varepsilon (f')^2\right)|\partial\varphi|_{g_0}^2|\partial\varphi|_{g}^2 -e^ff'|\partial \varphi|_{g_0}^2\left(\tr_{\omega}\hat\omega_t\right)
-\frac{C_1e^f}{\varepsilon} |\partial \varphi|_{g_0}^2\sum\limits_{i}g^{\overline{i}i}\\
&-e^ff'\left(\ppt\varphi\right)|\partial\varphi|_{g_0}^2
-2e^f\mathrm{Re}\left(e_k(\log\Omega)\overline{e}_k(\varphi)\right)
+(n+2)e^ff'|\partial\varphi|_{g_0}^2\nonumber\\
&-2\mathrm{Re}\left(e^ff'g^{\overline{i}i}\hat g_{i\overline{k}}e_k(\varphi)\overline{e}_i(\varphi)\right),\nonumber
\end{align}
where $C_1>1$ is a uniform constant.

Define
$$
f(\varphi)=\frac{e^{-A\left(\varphi- \varphi_0-1\right)}}{A},\quad
\varepsilon=\frac{Ae^{A\left(\varphi(x_0,t_0)-\varphi_0-1\right)}}{6},
$$
where $\varphi_0:=\sup\limits_{M\times[0,\tmax)}\varphi$.
Then, at $(x_0,t_0)$, we have
\begin{align}
\label{1order2}
f''-3\varepsilon(f')^2=\frac{Ae^{-A\left(\varphi(x_0,t_0)-\varphi_0-1\right)}}{2},
\end{align}
\begin{align}
\label{1order3}
&-e^ff'|\partial \varphi|_{g_0}^2\left(\tr_{\omega}\hat\omega_t\right)
-\frac{C_1e^f}{\varepsilon} |\partial \varphi|_{g_0}^2\sum\limits_{i}g^{\overline{i}i}\\
\geq&e^f|\partial \varphi|_{g_0}^2\left(\sum\limits_{i}g^{\overline{i}i}\right)\left(C_{0}^{-1}-\frac{6C_1}{A}\right)
e^{-A\left(\varphi(x_0,t_0)-\varphi_0-1\right)},\nonumber
\end{align}
and
\begin{align}
\label{1order6}
2\left|\mathrm{Re}\left(e^ff'g^{\overline{i}i}\hat g_{i\overline{k}}e_k(\varphi)\overline{e}_i(\varphi)\right)\right|
\leq -2C_0e^ff'|\partial\varphi|_g^2
=2C_0e^f|\partial\varphi|_g^2e^{-A\left(\varphi(x_0,t_0)-\varphi_0-1\right)}.
\end{align}
where for the inequality we use \eqref{hatomegaomega0} and the fact that $f'<0$.

Note that we assume that $|\partial\varphi|_{g_0}(x_0,t_0)>1$. Choosing $A=12C_0C_1$, combining \eqref{1order2}, \eqref{1order3} and \eqref{1order6} yields that, at $(x_0,t_0)$,
\begin{align}
\label{1order4}
 e^f\left(f''-3\varepsilon (f')^2\right)|\partial\varphi|_{g_0}^2|\partial\varphi|_{g}^2-2\mathrm{Re}\left(e^ff'g^{\overline{i}i}\hat g_{i\overline{k}}e_k(\varphi)\overline{e}_i(\varphi)\right)
\geq  C^{-1}|\partial\varphi|_{g_0}^2|\partial\varphi|_{g}^2
\end{align}
and
\begin{align}
\label{1order5}
-e^ff'|\partial \varphi|_{g_0}^2\left(\tr_{\omega}\hat\omega_t\right)
-\frac{C_1e^f}{\varepsilon} |\partial \varphi|_{g_0}^2\sum\limits_{i}g^{\overline{i}i}
\geq C^{-1}|\partial \varphi|_{g_0}^2\sum\limits_{i}g^{\overline{i}i}.
\end{align}
where $C>0$ is a uniform constant. Combining \eqref{1oder1}, \eqref{1order4} and \eqref{1order5} implies that, at $(x_0,t_0)$,
\begin{align}
\label{1order7}
0\geq C^{-1}|\partial\varphi|_{g_0}^2|\partial\varphi|_{g}^2+C^{-1}|\partial \varphi|_{g_0}^2\sum\limits_{i}g^{\overline{i}i}-C|\partial\varphi|_{g_0}^2-C.
\end{align}
Note that here the constant $C$ may be larger.

From  Lemma \ref{lemc0} and the same argument in \cite{ctw} (see also \cite{chu1607}), Theorem \ref{thm1order} follows.
\end{proof}
Using Theorem \ref{thm1order}, taking $\varepsilon=1/2$ in \eqref{evolutionpartialvarphi}, we get
\begin{lem}
\label{lemevolutionpartialu}
There exists a uniform constant $C>0$ such that
\begin{align}
\label{lemformulaevolutionpartialvarphi}
\left(\Delta^C_{\omega}-\ppt\right)|\partial\varphi|_{g_0}^2
\geq\frac{1}{2}g^{\overline{i}i}\left(|e_ie_k(\varphi)|^2+|e_i\overline{e}_k(\varphi)|^2\right)
-C \sum\limits_{i}g^{\overline{i}i}
-C.
\end{align}
\end{lem}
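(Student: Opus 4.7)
The plan is to re-examine the pointwise identity used in the proof of Theorem~\ref{thm1order}, drop the simplifying assumption $|\partial\varphi|_{g_0}^2(x_0,t_0)>1$ that was made there purely for convenience, specialize the free parameter to $\varepsilon=\tfrac{1}{2}$, and then absorb all factors of $|\partial\varphi|_{g_0}$ into constants via Theorem~\ref{thm1order}.

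First I would observe that the derivations of \eqref{lappartialu} and \eqref{pptu} are entirely pointwise: at each $(x,t)$ one picks a $g_0$-unitary frame $\{e_1,\dots,e_n\}$ that simultaneously diagonalizes $g(x,t)$, and no maximum-principle hypothesis is used. Subtracting \eqref{pptu} from \eqref{lappartialu} therefore yields an identity for $(\Delta^C_\omega-\partial_t)|\partial\varphi|_{g_0}^2$ that holds at every point. The right-hand side splits into the ``good'' term $g^{\overline{i}i}(|e_ie_k\varphi|^2+|e_i\overline{e}_k\varphi|^2)$, a collection of cross terms involving one second derivative $e_ie_k\varphi$ or $e_i\overline{e}_k\varphi$ paired with a factor of $|\partial\varphi|_{g_0}$ through structure constants or derivatives of $\hat g_{i\overline{i}}$, and terms of the schematic form $C|\partial\varphi|_{g_0}^2\sum_i g^{\overline{i}i}$ plus the $\log\Omega$ contribution $2\mathrm{Re}(e_k(\log\Omega)\overline{e}_k\varphi)$.

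Next I would bound the cross terms by Cauchy--Schwarz with a parameter $\varepsilon\in(0,\tfrac{1}{2}]$, splitting each into $\varepsilon\, g^{\overline{i}i}(|e_ie_k\varphi|^2+|e_i\overline{e}_k\varphi|^2)$ plus $C\varepsilon^{-1}|\partial\varphi|_{g_0}^2\sum_i g^{\overline{i}i}$; this is exactly the argument that produced \eqref{evolutionpartialvarphi}. Retaining the lowest-order piece (which earlier was swept into $-C|\partial\varphi|_{g_0}^2$ via the assumption $|\partial\varphi|_{g_0}^2>1$), we arrive at
\begin{align*}
\left(\Delta^C_{\omega}-\ppt\right)|\partial\varphi|_{g_0}^2
\geq (1-\varepsilon)g^{\overline{i}i}\bigl(|e_ie_k\varphi|^2+|e_i\overline{e}_k\varphi|^2\bigr)
-\frac{C}{\varepsilon}|\partial\varphi|_{g_0}^2\sum_i g^{\overline{i}i}
-C|\partial\varphi|_{g_0}.
\end{align*}
Setting $\varepsilon=\tfrac{1}{2}$ fixes the coefficient of the good term at $\tfrac{1}{2}$.

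Finally, Theorem~\ref{thm1order} gives the uniform pointwise bound $|\partial\varphi|_{g_0}\leq C$ on $M\times[0,\tmax)$, so the factor $|\partial\varphi|_{g_0}^2$ multiplying $\sum_i g^{\overline{i}i}$ is replaced by a constant and the last term becomes $O(1)$. Collecting the estimates produces \eqref{lemformulaevolutionpartialvarphi}. There is no real obstacle here beyond bookkeeping; the entire content of the lemma is the observation that, once $|\partial\varphi|_{g_0}$ is uniformly controlled, the computation behind \eqref{evolutionpartialvarphi} at $\varepsilon=\tfrac{1}{2}$ becomes a differential inequality valid globally on $M\times[0,\tmax)$ rather than merely at a spatial maximum.
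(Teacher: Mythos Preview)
Your proposal is correct and follows exactly the paper's approach: the paper's proof is the single line ``Using Theorem~\ref{thm1order}, taking $\varepsilon=1/2$ in \eqref{evolutionpartialvarphi}, we get'' the lemma. You have simply (and correctly) made explicit the two points the paper leaves implicit, namely that the computation leading to \eqref{evolutionpartialvarphi} is valid at every point once one chooses a $g_0$-unitary frame diagonalizing $g$, and that the auxiliary assumption $|\partial\varphi|_{g_0}^2>1$ and the residual $\log\Omega$ term are harmless after the uniform gradient bound from Theorem~\ref{thm1order}.
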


\section{Second order estimate}
\label{sec5}
In this section, we deduce the second order estimate of the solution $\varphi$ to \eqref{apcma} using the method from \cite{ctw} in the elliptic setup.
\begin{thm}
\label{thm2order}
Assume that $\varphi$ is the solution to the flow \eqref{apcma}. There exists a uniform constant $C>0$, independent of $t$, such that
\begin{align}
\sup_{M\times[0,\tmax)}|\nabla^2 \varphi|_{g_0}\leq C,
\end{align}
where $\nabla$ is the Levi-Civita connection with respect to the Riemannian metric $g_0$.
\end{thm}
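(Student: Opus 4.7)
The plan is to first bound $\tr_{g_0}g$ from above, then combine with the volume equivalence $\omega^n\asymp\omega_0^n$ from Lemma \ref{lemc0}(\ref{volumequi}) to obtain two-sided bounds $C^{-1}g_0\le g\le C g_0$, and finally convert this complex Hessian bound into a bound on $|\nabla^2\varphi|_{g_0}$ by exploiting the fact that, in the almost complex case, the failure of $\md J\md\varphi$ to be of type $(1,1)$ is governed entirely by the Nijenhuis tensor applied to $\md\varphi$.

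Following the elliptic argument of \cite{ctw}, I consider the test quantity
$$
Q:=\log\tr_{\hat g_t}g+h\bigl(|\partial\varphi|_{g_0}^2\bigr),\qquad h(s)=-\log(K-s),
$$
with $K$ chosen large enough that $h$ is smooth on the range of $|\partial\varphi|_{g_0}^2$, possible by Theorem \ref{thm1order}. The two ingredients of the maximum principle argument are: (i) a differential inequality for $\log\tr_{\hat g_t}g$ along the flow of the shape
$$
\bigl(\Delta^C_\omega-\ppt\bigr)\log\tr_{\hat g_t}g\;\ge\;-C\tr_\omega\hat g_t-C-\frac{\mathrm{B}}{\tr_{\hat g_t}g},
$$
obtained by a direct calculation using \eqref{apcma} and the curvature and torsion formulas from Section \ref{sec2}, where $\mathrm{B}$ is a quadratic expression in the third derivatives $e_ie_k\varphi$ and $e_i\overline{e}_k\varphi$ of $\varphi$; and (ii) the gradient evolution from Lemma \ref{lemevolutionpartialu}.

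At an interior maximum $(x_0,t_0)$ of $Q$ on $M\times[0,T']$ (for any $T'<\tmax$), one has $\partial Q=0$, so $\partial\log\tr_{\hat g_t}g=-h'\,\partial|\partial\varphi|_{g_0}^2$. Substituting this relation into the bad term $\mathrm{B}$ and using Cauchy--Schwarz converts $\mathrm{B}$ into a multiple of $(h')^2 g^{\overline{i}i}\bigl|e_i(|\partial\varphi|_{g_0}^2)\bigr|^2$, which is absorbed by the positive third order term $\tfrac{1}{2}h'\, g^{\overline{i}i}\bigl(|e_ie_k\varphi|^2+|e_i\overline{e}_k\varphi|^2\bigr)$ produced by Lemma \ref{lemevolutionpartialu}, provided $K$ is tuned appropriately. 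The main obstacle lies precisely here: commuting $e_i,\overline{e}_j$ past the Hessian of $\varphi$ forces extra third-order derivative terms involving $\N$ which have no counterpart in the K\"ahler case, and these must be controlled by the same absorption. Once this is done, the maximum principle $(\Delta^C_\omega-\ppt)Q\le 0$, together with the AM--GM inequality $\tr_\omega\hat g_t\ge n(\det\hat g_t/\det g)^{1/n}$ and Lemma \ref{lemc0}(\ref{volumequi}), produces $\tr_{\hat g_t}g(x_0,t_0)\le C$, hence $Q\le C$ everywhere, hence $\tr_{g_0}g\le C$ by \eqref{hatomegaomega0}.

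Combining this upper trace bound with the volume bound gives uniform ellipticity $C^{-1}g_0\le g\le Cg_0$, and therefore $|\ddbar\varphi|_{g_0}\le C$. The non-$(1,1)$ part of $\md J\md\varphi$ is, by the identity displayed just after \eqref{ddbarvarphi}, determined by $\N$ applied to $\md\varphi$, and hence is bounded in terms of $|\partial\varphi|_{g_0}$. Combined with the bound from Theorem \ref{thm1order} and the smooth background Christoffel symbols of $g_0$, this yields the desired estimate $|\nabla^2\varphi|_{g_0}\le C$.
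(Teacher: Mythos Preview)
The proposal has a genuine gap in its final step. You correctly observe that once $C^{-1}g_0\le g\le Cg_0$ one has $|\ddbar\varphi|_{g_0}\le C$, and that the identity displayed after \eqref{ddbarvarphi} shows the non-$(1,1)$ part of the $2$-\emph{form} $\md J\md\varphi$ is $-4J\circ\N(\,\cdot\,,\,\cdot\,)(\varphi)$, hence controlled by $|\partial\varphi|_{g_0}$. But this does not bound the full real Hessian $\nabla^2\varphi$. The Hessian is a \emph{symmetric} $2$-tensor, and its $(2,0)+(0,2)$ components --- concretely the quantities $e_ie_j(\varphi)$ with both indices of type $(1,0)$ --- are genuinely second order and are invisible to $\md J\md\varphi$: equation \eqref{djdvarphi20} shows $(\md J\md\varphi)(e_i,e_j)=-2\sqrt{-1}[e_i,e_j]^{(0,1)}(\varphi)$ involves only first derivatives of $\varphi$, precisely because the form is skew while $e_ie_j(\varphi)+e_je_i(\varphi)$ is symmetric. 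Thus a bound on $\ddbar\varphi$ together with the first-order estimate does \emph{not} yield $|\nabla^2\varphi|_{g_0}\le C$. This is exactly why the paper (following \cite{ctw}) abandons the trace quantity and works directly with the largest eigenvalue $\lambda_1(\nabla^2\varphi)$ of the real Hessian, using the perturbation argument, the test function $\log\lambda_1+\phi(|\partial\varphi|_{g_0}^2)+h(\varphi-\varphi_0)$, and the three-case splitting to absorb the third-order terms coming from commuting $e_i,\overline e_i,V_1$. The full real Hessian bound is not a cosmetic upgrade: it is the required input for the $C^{2,\alpha}$ estimate of \cite{chujianchuncvpde} used in Section \ref{sec6}.

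A secondary point: even the differential inequality you sketch for $\log\tr_{\hat g_t}g$ is more delicate than indicated. In the non-integrable case the commutators $[e_i,e_j]^{(0,1)}$ and $[\overline e_i,\overline e_j]^{(1,0)}$ generate extra third-order terms beyond those present in the Hermitian computation, and it is not clear that the good term $\tfrac{1}{2}h'\sum g^{\overline ii}(|e_ie_k\varphi|^2+|e_i\overline e_k\varphi|^2)$ from Lemma \ref{lemevolutionpartialu} suffices to absorb them with the simple test function $Q$. But even granting that step and the resulting trace bound $\tr_{\omega_0}\omega\le C$, you would still only have the complex Hessian estimate, not the statement of Theorem \ref{thm2order}.
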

\begin{proof}
Denote by $\lambda_1(\nabla^2\varphi)\geq \cdots\geq \lambda_{2n}(\nabla^2\varphi)$ the eigenvalues of $\nabla^2\varphi$ with respect to the Riemannian metric $g_0$.
Then we have $\Delta_{g_0}\varphi=\sum\limits_{\alpha=1}\lambda_{\alpha}(\nabla^2\varphi)$, where $\Delta_{g_0}$ is the Laplace-Beltrami operator of $g_0$.
Since
$$
\omega=\hat\omega_t+\ddbar\varphi>0,
$$
it follows that
\begin{align*}
\Delta^C_{\omega_0}\varphi=\tr_{\omega_0}\omega-\tr_{\omega_0}\hat\omega_t>-\tr_{\omega_0}\hat\omega_t>-C,
\end{align*}
where we use the fact that $\hat\omega_t$ vary smoothly on the compact interval $[0,T_0]$ and hence can be estimated uniformly.
This, together with \eqref{2laplace} and Theorem \ref{thm1order}, yields
\begin{align}
\label{lapg0varphi}
\Delta_{g_0}\varphi
=&2\Delta_{\omega_0}^C\varphi+\tau(\md \varphi)>-C,
\end{align}
where $C>0$ is a uniform constant. As a result, we get
\begin{align}
|\nabla^2 \varphi|_{g_0}\leq C\max\Big\{\lambda_1(\nabla^2\varphi),0\Big\}+C.
\end{align}
Therefore, it is sufficient to bound $\lambda_1(\nabla^2\varphi)$ from above in order to prove Theorem \ref{thm2order}.
Denote
$$
S_H:=\Big\{(x,t)\in M\times [0,\tmax):\;\lambda_1(\nabla^2\varphi)>0\Big\}.
$$
Without loss of generality, we assume $S_H\not=\emptyset$. Otherwise, we can get the upper bound of $\lambda_1(\nabla^2\varphi)$ directly.
We consider the quantity
$$
H=\log\lambda_1(\nabla^2\varphi)+\phi(|\partial \varphi|_g^2)+h(\varphi-\varphi_0),
$$
on the set $S_H$,
where
$$
\phi(s)=-\frac{1}{2}\log\left(1-\frac{s}{2K}\right),\quad h(s)=e^{-Ds}
$$
with sufficiently large uniform constant $D$ to be determined later. Here recall that $\varphi_0:=\sup\limits_{M\times[0,\tmax)}\varphi$, and for convenience, we use notation $K:=1+\sum\limits_{M\times[0,\tmax)}|\partial \varphi|_{g_0}^2$ which is a uniform constant by Theorem \ref{thm1order}. Note that
$$
\phi(|\partial \varphi|_{g_0}^2)\in\,[0,\,2\log 2]
$$
and
$$
\frac{1}{4K}\leq \phi'\leq \frac{1}{2K},\quad \phi''=2(\phi')^2.
$$
For any $T'\in (0,\tmax)$, assume that $H$ attains its maximum at $(x_0,t_0)$ on $\Big\{(x,t)\in M\times [0,T']:\;\lambda_1(\nabla^2\varphi)>0\Big\}$.
Note that $H$ is not smooth in general since the dimension of eigenspace associated to $\lambda_1(\nabla^2\varphi)$ may be strictly larger than $1$. We use a perturbation argument in \cite{ctw} to deal with this case (see also \cite{sz,stw1503}). Since $g_0$ is almost Hermitian metric, we can choose a coordinate patch $(U;\,x^1,\cdots,x^{2n})$ centered at $x_{0}$ such that
\begin{enumerate}
\item $U$ is diffeomorphic to a ball $B_2(0)\subset \mathbb{R}^{2n}$ of radius $2$ centered at $0$.
\item Denote by $\partial_{\alpha}$ the local vector fields $\partial/\partial x^{\alpha}$. There holds
$g_0(\partial_{\alpha},\partial_{\beta})|_{x_{0}}=\delta_{\alpha\beta},\,\alpha,\,\beta\hat{}=1,\cdots,2n$.
\item The almost complex structure $J$ satisfies $J(x_{0})=J_0$, where $J_0$ is the standard complex structure on $\mathbb{R}^{2n}$, i.e., $J_0\partial_{2i-1}=\partial_{2i}$ for $i=1,\cdots,n$.
\item There holds
\begin{align}
\label{zuobiao}
\partial_{\gamma}(g_0)_{\alpha\beta}|_{x_{0}}=0,\quad \forall\;\alpha,\,\beta,\,\gamma=1,\cdots,2n.
\end{align}
\item If at $x_0$, we define
\begin{align}
\label{10fields}
e_i:=\frac{1}{\sqrt{2}}\left(\partial_{2i-1}-\mn\partial_{2i}\right),\quad i=1,\cdots,n,
\end{align}
then these form a frame of $(1,0)$ vectors at $x_0$, and we have $(g_0)_{i\overline{j}}:=g_0(e_i,\overline{e}_j)=\delta_{ij}$, i.e., the frame is $g_0$-unitary. Furthermore, the argument in \cite{ctw} yields that we can assume $\left(g_{i\overline{j}}(x_0,t_0)\right)$ is diagonal with
$$
g_{1\overline{1}}(x_0,t_0)\geq\cdots\geq g_{n\overline{n}}(x_0,t_0).
$$
\end{enumerate}
We extend $e_1,\cdots,e_n$ smoothly to a $g_0$-unitary frame of $(1,0)$ vectors in a neighborhood of $x_0$, and from now on, the local unitary frame is fixed. Assume that  $V_1\in T_{x_0}M$ is a unit vector, i.e., $g_0(V_1,V_1)=1$, with
$$
\nabla^2\varphi(V_1,V_1)=\lambda_1(\nabla^2\varphi)(x_0,t_0).
$$
We can construct an orthonormal basis of $T_{x_0}M$, denoted by $V_1,\cdots,V_{2n}$, such that
$$
\nabla^2\varphi(V_{\alpha},V_{\alpha})=\lambda_{\alpha}(\nabla^2\varphi)(x_0,t_0), \quad\alpha=1,\cdots,2n
$$
with $\lambda_{1}(\nabla^2\varphi)(x_0,t_0)\geq \cdots\geq \lambda_{2n}(\nabla^2\varphi)(x_0,t_0)$. We assume
$V_{\beta}=\sum\limits_{\alpha=1}^{2n}V^{\alpha}{}_{\beta}\partial_{\alpha}$ for $\beta=1,\cdots,2n$. We extend $V_1,\cdots,V_{2n}$ to be vector fields in a neighborhood of $x_0$ by taking the components to be constant.
To use the perturbation argument, we define a smooth section $B=B_{\alpha\beta}\md x^{\alpha}\otimes \md x^{\beta}$ of $T^{\ast}M\otimes T^{\ast}M$ near $x_0$, where
$$
B_{\alpha\beta}=\delta_{\alpha\beta}-V^{\alpha}{}_1V^{\beta}{}_1.
$$
It is easy to deduce that the eigenvalues of $(B_{\alpha\beta})$  are $0,\underbrace{1,\cdots,1}_{(n-1)}$ and that $V_1$ is the eigenvector associated to the eigenvalue $0$, i.e., $B(V_1,V_1)=0$. Consider the endomorphism $\Phi=\left(\Phi^{\alpha}{}_{\beta}\right)$ of $TM$ defined by
\begin{align}
\label{phi}
\Phi^{\alpha}{}_{\beta}=g_0^{\alpha\gamma}\nabla^2_{\gamma\beta}\varphi-g_0^{\alpha\gamma}B_{\gamma\beta},
\end{align}
and denote its eigenvalues by
$$
\lambda_1(\Phi)\geq\cdots\geq\lambda_{2n}(\Phi).
$$
Since $B=\left(B_{\alpha\beta}\right)$ is nonnegative, we can deduce that $\lambda_1(\Phi)\leq \lambda_1(\nabla^2\varphi)$ in the neighborhood of $(x_0,t_0)$.
In particular,  we have
$$
\lambda_{1}(\Phi)(x_0,t_0)=\lambda_{1}(\nabla^2\varphi)(x_0,t_0),\quad\Phi(V_{\alpha})(x_0,t_0)=\lambda_{\alpha}(\Phi)(x_0,t_0)V_{\alpha},\quad \alpha=1,\cdots,2n
$$
and  hence
$$
\lambda_1(\Phi)(x_0,t_0)>\lambda_2(\Phi)(x_0,t_0)\geq\cdots\geq\lambda_{2n}(\Phi)(x_0,t_0).
$$
 This yields that $\lambda_1(\Phi)$ is smooth in a neighborhood of $(x_0,t_0)$. In the following, we write $\lambda_{\alpha}$ for $\lambda_{\alpha}(\Phi)$
for short. We can apply the maximum principle to the quantity
$$
\tilde H=\log \lambda_1(\Phi)+\phi(|\partial \varphi|_{g_0}^2)+h(\varphi-\varphi_0),
$$
which still obtains a local maximum at $(x_0,t_0)$.

We need the first and second derivatives of $\lambda_1$ at $(x_0,t_0)$ as follows.
\begin{lem}[Chu, Tosatti and Weinkove \cite{ctw}; 2016]
At $(x_0,t_0)$, we have
\begin{align}
\label{lambda1order}
\lambda_1^{\alpha\beta}:=&\frac{\partial \lambda_1}{\partial\Phi^{\alpha}{}_{\beta}}=V^{\alpha}{}_1V^{\beta}{}_1.\\
\label{lambda2order}
\lambda_1^{\alpha\beta,\gamma\delta}:=&\frac{\partial^2\lambda_1}{\partial\Phi^{\alpha}{}_{\beta}\partial\Phi^{\gamma}{}_{\delta}}
=\sum\limits_{\mu>1}\frac{V^{\alpha}{}_1V^{\beta}{}_{\mu}V^{\gamma}{}_{\mu}V^{\delta}{}_1+
V^{\alpha}{}_{\mu}V^{\beta}{}_{1}V^{\gamma}{}_{1}V^{\delta}{}_{\mu}}{\lambda_1-\lambda_{\mu}},
\end{align}
where the Greek indices  $\alpha,\beta,\gamma,\mu,\cdots$ go from $1$ to $2n$, unless otherwise indicated.
\end{lem}
By Lemma \ref{lemc0} and \eqref{apcma}, the arithmetic-geometry mean inequality yields
\begin{align}
\label{tromegaomega0xiajie}
\tr_{\omega}\omega_0\geq c,
\end{align}
for a uniform constant $c>0$. We also assume that, at $(x_0,t_0)$, there holds $\lambda_1\gg K\geq1$.
\begin{lem}
\label{lem1}
At $(x_0,t_0)$, we have
\begin{align}
\label{lem1formula}
\left(\Delta_{\omega}^C-\ppt\right)\lambda_1
\geq&2\sum\limits_{\alpha>1}g^{\overline{i}i}\frac{|e_i(\varphi_{V_{\alpha}V_{1}})|^2}{\lambda_1-\lambda_{\alpha}}
+g^{\overline{p}p}g^{\overline{q}q}\left|V_1(g_{p\overline{q}})\right|^2 \\
&-2g^{\overline{i}i}[V_1,e_i]V_1\overline{e}_i(\varphi)
-2g^{\overline{i}i}[V_1,\overline{e}_i]V_1\overline{e}_i(\varphi)-C\lambda_1\sum\limits_{i}g^{\overline{i}i},\nonumber
\end{align}
where we write
$$
\varphi_{\alpha\beta}:=\nabla^2_{\alpha\beta}\varphi,\quad \varphi_{V_{\alpha}V_{\beta}}:=\varphi_{\gamma\delta}V^{\gamma}{}_{\alpha}V^{\delta}{}_{\beta}=\nabla^2\varphi(V_{\alpha},V_{\beta}).
$$
\end{lem}
\begin{proof}
At $(x_0,t_0)$, noting that $g(x_0,t_0)$ is diagonal, by \eqref{lambda1order} and \eqref{lambda2order}, a direct computation yields
\begin{align}
\label{Llambda1}
\left(\Delta_{\omega}^C-\ppt\right)\lambda_1
=&g^{\overline{i}i}\lambda_{1}^{\alpha\beta,\gamma\delta}e_i(\Phi^{\gamma}{}_{\delta})\overline{e}_i(\Phi^{\alpha}{}_{\beta})
+g^{\overline{i}i}\lambda_{1}^{\alpha\beta}e_i\overline{e}_i(\Phi^{\alpha}{}_{\beta})\\
&-g^{\overline{i}i}\lambda_{1}^{\alpha\beta}[e_i,\overline{e}_i]^{(0,1)}(\Phi^{\alpha}{}_{\beta})
-\lambda_{1}^{\alpha\beta}\ppt (\Phi^{\alpha}{}_{\beta})\nonumber\\
=&g^{\overline{i}i}\lambda_{1}^{\alpha\beta,\gamma\delta}e_i(\varphi_{\gamma\delta})\overline{e}_i(\varphi_{\alpha\beta})
+g^{\overline{i}i}\lambda_{1}^{\alpha\beta}e_i\overline{e}_i(\varphi_{\alpha\beta})\nonumber\\
&+g^{\overline{i}i}\lambda_{1}^{\alpha\beta}\varphi_{\gamma\beta}e_i\overline{e}_i(g_0^{\alpha\gamma})
-g^{\overline{i}i}\lambda_{1}^{\alpha\beta}B_{\gamma\beta}e_i\overline{e}_i(g_0^{\alpha\gamma})\nonumber\\
&-g^{\overline{i}i}\lambda_{1}^{\alpha\beta}[e_i,\overline{e}_i]^{(0,1)}(\varphi_{\alpha\beta})
-\lambda_{1}^{\alpha\beta}\nabla_{\alpha\beta}^2\dot{\varphi}\nonumber\\
\geq&2\sum\limits_{\alpha>1}g^{\overline{i}i}\frac{|e_i(\varphi_{V_{\alpha}V_{1}})|^2}{\lambda_1-\lambda_{\alpha}}
+g^{\overline{i}i}e_i\overline{e}_i(\varphi_{V_1V_1})
-g^{\overline{i}i}[e_i,\overline{e}_i]^{(0,1)}(\varphi_{V_1V_1})\nonumber\\
&-\dot{\varphi}_{V_1V_1}-C\lambda_1\sum\limits_{i}g^{\overline{i}i}.\nonumber
\end{align}
Since $\varphi_{V_{\alpha}V_{\beta}}=V_{\alpha}V_{\beta}(\varphi)-\left(\nabla_{V_{\alpha}}V_{\beta}\right)(\varphi)$ for any $\alpha$ and $\beta$, we have
\begin{align}
\label{eieiuv1v11}
g^{\overline{i}i}\left(e_i\overline{e}_i-[e_i,\overline{e}_i]^{(0,1)}\right)(\varphi_{V_1V_1})
=g^{\overline{i}i}\left(e_i\overline{e}_i-[e_i,\overline{e}_i]^{(0,1)}\right)\left( V_1V_1(\varphi)-\left(\nabla_{V_{1}}V_{1}\right)(\varphi)\right).
\end{align}
From now on, we write $G$ for term bounded by $C\lambda_1\sum\limits_{i}g^{\overline{i}i}$ which may change from line to line.
At $(x_0,t_0)$, noting that $|\partial \varphi|_{g_0}\leq C$, a direct computation yields
\begin{align}
\label{eieiuv1v12}
&g^{\overline{i}i}\left(e_i\overline{e}_i-[e_i,\overline{e}_i]^{(0,1)}\right)\left(\nabla_{V_{1}}V_{1}\right)(\varphi)\\
=&g^{\overline{i}i}e_i\left(\nabla_{V_{1}}V_{1}\right)\overline{e}_i(\varphi)
-g^{\overline{i}i}\left(\nabla_{V_{1}}V_{1}\right)[e_i,\overline{e}_i]^{(0,1)}(\varphi)+G\nonumber\\
=&g^{\overline{i}i}\left(\nabla_{V_{1}}V_{1}\right)e_i\overline{e}_i(\varphi)
-g^{\overline{i}i}\left(\nabla_{V_{1}}V_{1}\right)[e_i,\overline{e}_i]^{(0,1)}(\varphi)
+G\nonumber\\
=&g^{\overline{i}i}\left(\nabla_{V_{1}}V_{1}\right)\left(e_i\overline{e}_i(\varphi)-[e_i,\overline{e}_i]^{(0,1)}(\varphi)\right)
+G\nonumber\\
=&g^{\overline{i}i}\left(\nabla_{V_{1}}V_{1}\right)\left(g_{i\overline{i}}-\hat g_{i\overline{i}}\right)
+G\nonumber\\
=&g^{\overline{i}i}\left(\nabla_{V_{1}}V_{1}\right)(g_{i\overline{i}})+G\nonumber
\end{align}
and
\begin{align}
\label{giieieiv1v1}
&g^{\overline{i}i}\left(e_i\overline{e}_iV_{1}V_{1}(\varphi)-[e_i,\overline{e}_i]^{(0,1)}V_1V_1(\varphi)\right)\\
=&g^{\overline{i}i}V_1V_1\left(e_i\overline{e}_i(\varphi)-[e_i,\overline{e}_i]^{(0,1)}(\varphi)\right)
-2g^{\overline{i}i}[V_1,e_i]V_1\overline{e}_i(\varphi)-2g^{\overline{i}i}[V_1,\overline{e}_i]V_1e_i(\varphi)+G\nonumber\\
=&g^{\overline{i}i}V_1V_1\left(g_{i\overline{i}}-\hat g_{i\overline{i}}\right)
-2g^{\overline{i}i}[V_1,e_i]V_1\overline{e}_i(\varphi)-2g^{\overline{i}i}[V_1,\overline{e}_i]V_1e_i(\varphi)+G\nonumber\\
=&g^{\overline{i}i}V_1V_1(g_{i\overline{i}})
-2g^{\overline{i}i}[V_1,e_i]V_1\overline{e}_i(\varphi)-2g^{\overline{i}i}[V_1,\overline{e}_i]V_1e_i(\varphi)+G,\nonumber
\end{align}
where we use the fact that $\hat\omega_t$ vary smoothly on $M\times[0,T_0]$ and hence can be estimated uniformly.

From \eqref{apcma}, we get
\begin{align}
\label{1apcma}
g^{\overline{i}i}\left(\nabla_{V_{1}}V_{1}\right)(g_{i\overline{i}})
=\left(\nabla_{V_{1}}V_{1}\right)(\dot\varphi)
+\left(\nabla_{V_{1}}V_{1}\right)(\log\Omega).
\end{align}
Applying $V_1$ twice to \eqref{apcma} implies
\begin{align}
\label{2apcma}
g^{\overline{i}i}V_{1}V_1(g_{i\overline{i}})
=g^{\overline{p}p}g^{\overline{q}q}\left|V_1(g_{p\overline{q}})\right|^2+V_1V_1(\dot\varphi)+V_1V_1(\log\Omega).
\end{align}
Combining \eqref{tromegaomega0xiajie}, \eqref{eieiuv1v11}, \eqref{eieiuv1v12}, \eqref{giieieiv1v1}, \eqref{1apcma} and \eqref{2apcma} yields
\begin{align}
\label{Llambda2}
&g^{\overline{i}i}e_i\overline{e}_i(\varphi_{V_1V_1})
-g^{\overline{i}i}[e_i,\overline{e}_i]^{(0,1)}(\varphi_{V_1V_1})\\
=&V_1V_1(\dot\varphi)-\left(\nabla_{V_{1}}V_{1}\right)(\dot\varphi)
+g^{\overline{p}p}g^{\overline{q}q}\left|V_1(g_{p\overline{q}})\right|^2\nonumber\\
&-2g^{\overline{i}i}[V_1,e_i]V_1\overline{e}_i(\varphi)-2g^{\overline{i}i}[V_1,\overline{e}_i]V_1e_i(\varphi)+G\nonumber\\
=&\dot{\varphi}_{V_1V_1}
+g^{\overline{p}p}g^{\overline{q}q}\left|V_1(g_{p\overline{q}})\right|^2
-2g^{\overline{i}i}[V_1,e_i]V_1\overline{e}_i(\varphi)-2g^{\overline{i}i}[V_1,\overline{e}_i]V_1e_i(\varphi)+G.\nonumber
\end{align}
Thanks to \eqref{Llambda1} and \eqref{Llambda2}, we get \eqref{lem1formula}.
\end{proof}
\begin{lem}
\label{lemzuihouyige}
At $(x_0,t_0)$, for any $\varepsilon\in(0,1/2]$, there holds
\begin{align}
\label{Ltildeh1}
0\geq&(2-\varepsilon)\sum\limits_{\alpha>1}g^{\overline{i}i}\frac{|e_i(\varphi_{V_{\alpha}V_{1}})|^2}{\lambda_1\left(\lambda_1-\lambda_{\alpha}\right)}
+\frac{g^{\overline{p}p}g^{\overline{q}q}\left|V_1(g_{p\overline{q}})\right|^2}{\lambda_1}\\
&-(1+\varepsilon)\frac{g^{\overline{i}i}|e_i\left(\varphi_{V_{1}V_1}\right)|^2}{\lambda_1^2}
+\frac{\phi'}{2}\sum\limits_pg^{\overline{i}i}(|e_ie_p(\varphi)|^2+|e_i\overline{e}_p(\varphi)|^2)
-\frac{C}{\varepsilon}\sum\limits_{i}g^{\overline{i}i}\nonumber\\
&+\phi''g^{\overline{i}i}\left|e_i\left(|\partial \varphi|_{g_0}^2\right)\right|^2
-De^{-D(\varphi-\varphi_0)}\left(\Delta_{\omega}^C-\ppt\right)\varphi+D^2e^{-D(\varphi-\varphi_0)} |\partial\varphi|_g^2.\nonumber
\end{align}
\end{lem}
\begin{proof}
At $(x_0,t_0)$, we define
\begin{align*}
[V_1,e_i]=\sum\limits_{\alpha=1}^{2n}\tau_{i\alpha}V_{\alpha}
\end{align*}
for some $\tau_{i\alpha}\in\mathbb{C}$ uniformly bounded. This yields
\begin{align*}
\left|[V_1,e_i]V_1\overline{e}_i(\varphi)
+[V_1,\overline{e}_i]V_1\overline{e}_i(\varphi)\right|\leq C\sum\limits_{\alpha=1}^{2n}\left|V_{\alpha}V_1e_i(\varphi)\right|.
\end{align*}
Note that
\begin{align*}
V_{\alpha}V_1e_i(\varphi)
=&V_{\alpha}e_i V_1(\varphi)+V_{\alpha}\left[V_1,e_i\right](\varphi)\\
=&e_i V_{\alpha} V_1(\varphi)+\left[V_{\alpha},e_i\right]V_1(\varphi)+V_{\alpha}\left[V_1,e_i\right](\varphi)\\
=&e_i\left(\varphi_{V_{\alpha}V_1}\right)+e_i(\nabla_{V_{\alpha}}V_1)(\varphi)
+\left[V_{\alpha},e_i\right]V_1(\varphi)+V_{\alpha}\left[V_1,e_i\right](\varphi).
\end{align*}
Therefore, it follows that, at $(x_0,t_0)$,
\begin{align}
\label{fiivieieibaru}
&2\frac{g^{\overline{i}i}\left([V_1,e_i]V_1\overline{e}_i(\varphi)
+[V_1,\overline{e}_i]V_1e_i(\varphi)\right)}{\lambda_1}\\
\leq&C\frac{g^{\overline{i}i}|e_i\left(\varphi_{V_{1}V_1}\right)|}{\lambda_1}
+C\sum\limits_{\alpha>1}\frac{g^{\overline{i}i}|e_i\left(\varphi_{V_{\alpha}V_1}\right)|}{\lambda_1}
+C\sum\limits_{i}g^{\overline{i}i}\nonumber\\
\leq&\varepsilon\frac{g^{\overline{i}i}|e_i\left(\varphi_{V_{1}V_1}\right)|^2}{\lambda_1^2}
+\varepsilon\sum\limits_{\alpha>1}\frac{g^{\overline{i}i}|e_i\left(\varphi_{V_{\alpha}V_1}\right)|^2}{\lambda_1(\lambda_{1}-\lambda_{\alpha})}
+\frac{C}{\varepsilon}\sum\limits_{i}g^{\overline{i}i}
+\frac{C}{\varepsilon}\sum\limits_{i}g^{\overline{i}i}\sum\limits_{\alpha>1}\frac{\lambda_1-\lambda_{\alpha}}{\lambda_1}\nonumber\\
\leq&\varepsilon\frac{g^{\overline{i}i}|e_i\left(\varphi_{V_{1}V_1}\right)|^2}{\lambda_1^2}
+\varepsilon\sum\limits_{\alpha>1}\frac{g^{\overline{i}i}|e_i\left(\varphi_{V_{\alpha}V_1}\right)|^2}{\lambda_1(\lambda_{1}-\lambda_{\alpha})}
+\frac{C}{\varepsilon}\sum\limits_{i}g^{\overline{i}i},\nonumber
\end{align}
where we use the Cauchy-Schwarz inequality for $\varepsilon\in (0,1/2]$ and \eqref{lapg0varphi} to get, at $(x_0,t_0)$,
\begin{align*}
\sum\limits_{\alpha>1}\frac{\lambda_1-\lambda_{\alpha}}{\lambda_1}
=&(2n-1)-\sum\limits_{\alpha>1}\frac{\lambda_{\alpha}(\nabla^2\varphi)-1}{\lambda_1}\\
=&2n+\frac{2n-1}{\lambda_1}-\frac{\Delta u}{\lambda_1}\leq 4n-1+C/\lambda_1\leq C,
\end{align*}
by the assumption that $\lambda_1(\nabla^2\varphi)\gg K\geq 1$.

At $(x_0,t_0)$, since we have
\begin{align*}
0\geq&\left(\Delta_{\omega}^C-\ppt\right)\tilde H\\
=&\frac{1}{\lambda_1}\left(\Delta_{\omega}^C-\ppt\right)\lambda_1-\frac{g^{\overline{i}i}|e_i(\lambda_1)|^2}{\lambda_1^2}
+\phi'\left(\Delta_{\omega}^C-\ppt\right)|\partial\varphi|_{g_0}^2
+\phi''g^{\overline{i}i}\left|e_i(|\partial\varphi|_{g_0}^2)\right|^2\nonumber\\
&-De^{-D\varphi}\left(\Delta_{\omega}^C-\ppt\right)\varphi+D^2e^{-D\varphi}|\partial\varphi|_{g}^2,\nonumber
\end{align*}
and a direct computation yields $e_{i}(\lambda_1)=e_i(\varphi_{V_1V_1})$,
the inequality \eqref{Ltildeh1} follows from \eqref{lemformulaevolutionpartialvarphi}, \eqref{lem1formula} and \eqref{fiivieieibaru}.
\end{proof}
In what follows, we use $C_D$ to denote the constant depending on the initial data and $D$, which is a uniform constant when $D$ is determined.
We split up into different cases.

\textbf{Case 1:} Assume that
\begin{align}
\label{case1condition}
g_{1\overline{1}}(x_0,t_0)\leq D^3 e^{-2D(\varphi(x_0,t_0)-\varphi_0)}g_{n\overline{n}}(x_0,t_0).
\end{align}
Since $\tilde H$ attains maximum at $(x_0,t_0)$, we have $e_{i}(\tilde H)=0$, i.e.,
\begin{align}
\label{eiH}
\frac{e_i(\varphi_{V_1V_1})}{\lambda_1}=De^{-D(\varphi-\varphi_0)}e_i(\varphi)-\phi'e_i\left(|\partial\varphi|_{g_0}^2\right).
\end{align}
Taking $\varepsilon=1/2$, combining \eqref{eiH} and the Cauchy-Schwarz inequality yields
\begin{align}
\label{case1}
&-\frac{3}{2}\frac{g^{\overline{i}i}|e_i(\varphi_{V_1V_1})|^2}{\lambda_1^2}\\
=&-\frac{3}{2}g^{\overline{i}i}\left|De^{-D(\varphi-\varphi_0)}e_i(\varphi)-\phi'e_i\left(|\partial\varphi|_{g_0}^2\right)\right|^2\nonumber\\
\geq&-6\left(\sup\limits_{M\times[0,\tmax)}|\partial\varphi|_{g_0}^2\right)D^2e^{-2D(\varphi-\varphi_0)}\sum\limits_{i}g^{\overline{i}i}
-2(\phi')^2g^{\overline{i}i}\left|e_i\left(|\partial\varphi|_{g_0}^2\right)\right|^2\nonumber\\
=&-6\left(\sup\limits_{M\times[0,\tmax)}|\partial\varphi|_{g_0}^2\right)D^2e^{-2D(\varphi-\varphi_0)}\sum\limits_{i}g^{\overline{i}i}
-\phi''g^{\overline{i}i}\left|e_i\left(|\partial\varphi|_{g_0}^2\right)\right|^2\nonumber
\end{align}
From \eqref{Ltildeh1} and \eqref{case1}, it follows that
\begin{align}
\label{Ltildeh11}
0\geq&
-6\left(\sup\limits_{M\times[0,\tmax)}|\partial\varphi|_{g_0}^2\right)D^2e^{-2D(\varphi-\varphi_0)}\sum\limits_{i}g^{\overline{i}i}
+\frac{\phi'}{2}\sum\limits_pg^{\overline{i}i}\left(|e_ie_p(\varphi)|^2+|e_i\overline{e}_p(\varphi)|^2\right)\\
&-De^{-D(\varphi-\varphi_0)}\left(\Delta_{\omega}^C-\ppt\right)\varphi -C\sum\limits_{i}g^{\overline{i}i},\nonumber
\end{align}
where we discard some positive terms. Thanks to \eqref{apcma} and \eqref{case1condition}, we can deduce
\begin{align*}
C_D\geq g_{1\overline{1}}(x_0,t_0)\geq\cdots\geq  g_{n\overline{n}}(x_0,t_0)\geq C_D^{-1}.
\end{align*}
This, together with Lemma \ref{lemc0} and \eqref{Ltildeh11}, yields
\begin{align}
\label{hessvarphi}
\sum\limits_{i,p}\left(|e_ie_p(\varphi)|^2+|e_i\overline{e}_p(\varphi)|^2\right)(x_0,t_0)\leq C_D,
\end{align}
where we also use
\begin{align}
\label{lapomegavarphiuse}
-De^{-D(\varphi-\varphi_0)}\Delta_{\omega}^C\varphi
=&-De^{-D(\varphi-\varphi_0)}g^{\overline{i}i}\left(g_{i\overline{i}}-\hat g_{i\overline{i}}\right)\\
\geq& -nDe^{-D(\varphi-\varphi_0)}+C_0^{-1}De^{-D(\varphi-\varphi_0)}\sum\limits_{i}g^{\overline{i}i}.\nonumber
\end{align}
From Theorem \ref{thm1order} and \eqref{hessvarphi}, it follows that $\lambda_1(x_0,t_0)$ and hence $\tilde H$ is bounded from above. This completes the proof of Case 1.

\textbf{Case 2:} At $(x_0,t_0)$, assume that
\begin{align}
\label{case2condition}
\frac{\phi'}{4}\sum\limits_pg^{\overline{i}i}\left(|e_ie_p(\varphi)|^2+|e_i\overline{e}_p(\varphi)|^2\right)
\geq 6\left(\sup\limits_{M\times[0,\tmax)}|\partial\varphi|_{g_0}^2\right)D^2e^{-2D(\varphi-\varphi_0)}\sum\limits_{i}g^{\overline{i}i}.
\end{align}
By the same argument as in Case 1, we also have \eqref{Ltildeh11}. From Lemma \ref{lemc0}, \eqref{Ltildeh11}, \eqref{lapomegavarphiuse} and \eqref{case2condition}, it follows that, at $(x_0,t_0)$,
\begin{align}
\label{case2use}
0\geq C^{-1}\sum\limits_p g^{\overline{i}i}\left(|e_ie_p(\varphi)|^2+|e_i\overline{e}_p(\varphi)|^2\right)+(C_0^{-1}D-C)\sum\limits_ig^{\overline{i}i}-C_D.
\end{align}
We choose $D$ sufficiently large such that $C_0^{-1}D-C>1$. Then from \eqref{apcma}, we can deduce the lower and upper bounds of $g_{i\overline{i}}$ and applying the same argument as in Case 1 to \eqref{case2use} completes the proof of Case 2.

\textbf{Case 3:} At $(x_0,t_0)$, neither Case 1 nor Case 2 holds.

In this case, we need to estimate
\begin{align*}
(2-\varepsilon)\sum\limits_{\alpha>1}g^{\overline{i}i}\frac{|e_i(\varphi_{V_{\alpha}V_{1}})|^2}{\lambda_1\left(\lambda_1-\lambda_{\alpha}\right)}
+\frac{g^{\overline{p}p}g^{\overline{q}q}\left|V_1(g_{p\overline{q}})\right|^2}{\lambda_1}
-(1+\varepsilon)\frac{g^{\overline{i}i}|e_i\left(\varphi_{V_{1}V_1}\right)|^2}{\lambda_1^2}
\end{align*}
in Lemma \ref{lemzuihouyige}. For this aim, we define
$$
I:=\Big\{i:\;g_{i\overline{i}}(x_0,t_0)> D^3 e^{-2D(\varphi(x_0,t_0)-\varphi_0)}g_{n\overline{n}}(x_0,t_0)\Big\}.
$$
Since Case 1 does not hold, we have $1\in I$. Without loss of generality, we denote $I=\Big\{1,\cdots,j\Big\}$.
By the similar argument of \cite[Lemma 5.5]{ctw}, we get
\begin{lem}
\label{lemyinyong}
At $(x_0,t_0)$, for any $(0,1/2]$, we have
\begin{align*}
-(1+\varepsilon)\sum\limits_{i\in I}\frac{g^{\overline{i}i}|e_i\left(\varphi_{V_{1}V_1}\right)|^2}{\lambda_1^2}
\geq -\sum\limits_{i}g^{\overline{i}i}-\phi''\sum\limits_{i\in I}g^{\overline{i}i}\left|e_i\left(|\partial\varphi|_{g_0}^2\right)\right|^2
\end{align*}
\end{lem}
Assume that $\lambda_1(x_0,t_0)\geq C_D/\varepsilon^3$, where $D$ and $\varepsilon$ will be chosen uniformly later. The similar arguments in \cite[Lemma 5.6, Lemma 5.7, Lemma 5.8]{ctw} yield
\begin{lem}
\label{lemyinyong2}
At $(x_0,t_0)$, for any $(0,1/6]$, we have
\begin{align*}
&(2-\varepsilon)\sum\limits_{\alpha>1}g^{\overline{i}i}\frac{|e_i(\varphi_{V_{\alpha}V_{1}})|^2}{\lambda_1\left(\lambda_1-\lambda_{\alpha}\right)}
+\frac{g^{\overline{p}p}g^{\overline{q}q}\left|V_1(g_{p\overline{q}})\right|^2}{\lambda_1}
-(1+\varepsilon)\sum\limits_{i\not\in I}\frac{g^{\overline{i}i}|e_i\left(\varphi_{V_{1}V_1}\right)|^2}{\lambda_1^2}\\
\geq& -3\varepsilon \sum\limits_{i\not\in I}\frac{g^{\overline{i}i}|e_i\left(\varphi_{V_{1}V_1}\right)|^2}{\lambda_1^2}-\frac{C}{\varepsilon}\sum\limits_{i}g^{\overline{i}i}.
\end{align*}
\end{lem}
Since $\partial\tilde H(x_0,t_0)=0$, for any $\varepsilon\in(0,1/6]$, the Cauchy-Schwarz inequality yields
\begin{align}
\label{2orderguji}
&-3\varepsilon \sum\limits_{i\not\in I}\frac{g^{\overline{i}i}|e_i\left(\varphi_{V_{1}V_1}\right)|^2}{\lambda_1^2}\\
=&-3\varepsilon \sum\limits_{i\not\in I}g^{\overline{i}i}\left|De^{-D(\varphi-\varphi_0)}e_i(\varphi)-\phi'e_i\left(|\partial\varphi|_{g_0}^2\right)\right|^2\nonumber\\
\geq&-6\varepsilon D^2e^{-2D(\varphi-\varphi_0)}|\partial\varphi|_{g}^2
-6\varepsilon (\phi')^2\sum\limits_{i\not\in I}g^{\overline{i}i}\left|e_i\left(|\partial\varphi|_{g_0}^2\right)\right|^2\nonumber\\
\geq&-6\varepsilon D^2e^{-2D(\varphi-\varphi_0)}|\partial\varphi|_{g}^2
-\phi''\sum\limits_{i\not\in I}g^{\overline{i}i}\left|e_i\left(|\partial\varphi|_{g_0}^2\right)\right|^2.\nonumber
\end{align}
From \eqref{Ltildeh1}, \eqref{lapomegavarphiuse}, \eqref{2orderguji}, Lemma \ref{lemc0}, Lemma \ref{lemyinyong} and Lemma \ref{lemyinyong2}, it follows that, at $(x_0,t_0)$,
\begin{align}
\label{2jiezuihouyigeguji}
0\geq&\frac{\phi'}{2}\sum\limits_pg^{\overline{i}i}(|e_ie_p(\varphi)|^2+|e_i\overline{e}_p(\varphi)|^2)
+\left(C_0^{-1}De^{-D(\varphi-\varphi_0)}-\frac{C_2}{\varepsilon}\right)\sum\limits_{i}g^{\overline{i}i}\\
&-C_2De^{-D(\varphi-\varphi_0)}
+\left(D^2e^{-D(\varphi-\varphi_0)}-6\varepsilon D^2e^{-2D(\varphi-\varphi_0)}\right) |\partial\varphi|_g^2.\nonumber
\end{align}
Choose
$$
D=C_0(6C_2+1),\quad \varepsilon=e^{D\left(\varphi(x_0,t_0)-\varphi_0\right)}/6\in(0,1/6].
$$
Now at $(x_0,t_0)$, by Theorem \ref{thm1order} and \eqref{2jiezuihouyigeguji}, we can deduce
$$
\sum\limits_p g^{\overline{i}i}\left(|e_ie_p(\varphi)|^2+|e_i\overline{e}_p(\varphi)|^2\right)+\sum\limits_i g^{\overline{i}i}\leq C,
$$
for a uniform constant $C>0$.  The similar discussion in Case 1 yields the upper bound for $\lambda_1(x_0,t_0)$ and hence for $\tilde H$, which completes the proof of Case 3.
\end{proof}
\section{Proof of the maximum time existence theorem}
\label{sec6}
In this section, using the priori estimates established earlier, we get the higher order priori estimates and complete the proof of Theorem \ref{mainthm}.
\begin{proof}[Proof of Theorem \ref{mainthm}]
By Theorem \ref{thm2order}, there exists a uniform constant $C>0$ such that
$$
C^{-1}\omega_0\leq\omega\leq C\omega_0.
$$
From the higher order estimates of \cite{chu1607} using the $C^{2,\alpha}$ estimate of \cite{chujianchuncvpde} (see also \cite{TWWY}), it follows that for each $k=0,1,2,\cdots$, there exists uniform constants $C_k>0$ such that
$$
\|\varphi(t)\|_{C^k(\omega_0)}\leq C_k.
$$
These uniform estimates on $[0,\tmax)$ imply that we can take limits of $\varphi(t)$ and get a solution on $[0,\tmax]$. Applying the standard parabolic short time existence theory we get a solution a little beyond $\tmax$, a contradiction. Hence there exists a solution $\varphi$ to \eqref{apcma} on $[0,T_0)$. Taking $\ddbar$ on both sides of \eqref{apcma}, we get a solution to \eqref{acrf} on $[0,T_0)$. Since $T_0<T$ is arbitrary, we get a solution to \eqref{acrf} on $[0,T)$. Uniqueness follows from the uniqueness of the solution to \eqref{apcma}. As mentioned before, the flow \eqref{acrf} cannot extend beyond $T$.
\end{proof}
\section{Some convergence results}
\label{sec7}
In this section, we consider some convergence results of the flow \eqref{acrf}. Firstly, let us recall the case when there exists a volume form $\Omega$ with $\cric(\Omega)=0$ considered by Chu \cite{chu1607}.  The similar argument as in Lemma \ref{dengjiadingyi}, yields that
\begin{lem}[Chu \cite{chu1607}; 2016]
Assume that there exists a volume form $\Omega$ with $\cric(\Omega)=0$. A smooth family $\omega(t)$ of almost Hermitian metrics on $[0,\infty)$ solves the flow \eqref{acrf} if and only if there is a family of smooth functions $\varphi(t)$ for $t\in [0,\infty)$ solves
\begin{align*}
\ppt\varphi=\log\frac{\left(\omega_0+\ddbar\varphi\right)^n}{\Omega},\quad \omega_0+\ddbar\varphi>0,\quad \varphi(0)=0,
\end{align*}
with $\omega(t)=\omega_0+\ddbar\varphi$.
\end{lem}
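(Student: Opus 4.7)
The strategy is to imitate the proof of Lemma \ref{dengjiadingyi}, taking advantage of the fact that the existence of a volume form $\Omega$ with $\cric(\Omega)=0$ lets us use $\omega_0$ itself as a time-independent reference form, rather than the auxiliary path $\hat\omega_t$. The crucial input is \eqref{omegaOmega}: specializing it with $\tilde\omega$ any almost Hermitian metric $\omega$ and taking the $(1,1)$ part, and using $\cric(\Omega)=0$, gives
\begin{align*}
\cric(\omega)=-\ddbar\log\frac{\omega^n}{\Omega}.
\end{align*}
Thus the entire Chern-Ricci form of the evolving metric can be traded for a scalar $\ddbar$ expression, which is exactly what reduces the flow to a parabolic Monge-Amp\`ere equation.

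For the \emph{if} direction, I would start from a solution $\varphi(t)$ of the scalar equation, set $\omega(t):=\omega_0+\ddbar\varphi(t)$, and differentiate in $t$. This yields
\begin{align*}
\ppt\omega(t)=\ddbar\ppt\varphi=\ddbar\log\frac{(\omega_0+\ddbar\varphi)^n}{\Omega}=\ddbar\log\frac{\omega(t)^n}{\Omega},
\end{align*}
and the displayed identity above converts the right side into $-\cric(\omega(t))$. So $\omega(t)$ satisfies \eqref{acrf} with $\omega(0)=\omega_0$.

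For the \emph{only if} direction, given $\omega(t)$ solving \eqref{acrf}, I would define
\begin{align*}
\varphi(t):=\int_0^t\log\frac{\omega(s)^n}{\Omega}\,\md s,
\end{align*}
so that $\varphi(0)=0$ and $\ppt\varphi=\log(\omega^n/\Omega)$. To conclude, I need to verify $\omega(t)=\omega_0+\ddbar\varphi(t)$. Differentiating the difference and applying \eqref{omegaOmega} together with $\cric(\Omega)=0$ gives
\begin{align*}
\ppt\bigl(\omega(t)-\omega_0-\ddbar\varphi(t)\bigr)=-\cric(\omega(t))-\ddbar\log\frac{\omega(t)^n}{\Omega}=0,
\end{align*}
and since the bracketed expression vanishes at $t=0$, it vanishes identically; this simultaneously gives the scalar equation for $\varphi$ and the form $\omega(t)=\omega_0+\ddbar\varphi(t)$.

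There is no real obstacle here beyond bookkeeping: the content is entirely that \eqref{omegaOmega} collapses to the clean identity $\cric(\omega)=-\ddbar\log(\omega^n/\Omega)$ when $\cric(\Omega)=0$. The only point that needs mild care is tracking signs and the $(1,1)$-part convention, remembering that $\ddbar=\tfrac{1}{2}(\md J\md)^{(1,1)}$ and that $\cric$ denotes the $(1,1)$ part of $\ric$, so that passing from the identity \eqref{omegaOmega} to its $(1,1)$ component uses these definitions consistently. Apart from this, the proof is a direct transcription of the argument of Lemma \ref{dengjiadingyi} with $\hat\omega_t$ replaced by the constant metric $\omega_0$.
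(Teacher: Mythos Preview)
Your proposal is correct and takes essentially the same approach as the paper: the paper simply states that the lemma follows by ``the similar argument as in Lemma \ref{dengjiadingyi}'', and your write-up is precisely that argument, specialized via \eqref{omegaOmega} and $\cric(\Omega)=0$ so that the reference path $\hat\omega_t$ collapses to the constant $\omega_0$.
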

Chu \cite{chu1607} shows that $\varphi(t)$ converge to $\varphi_{\infty}$ smoothly as $t\longrightarrow \infty$ and $\omega_{\infty}=\omega_0+\ddbar\varphi_{\infty}>0$ with $\cric(\omega_{\infty})=0$.

If the complex structure $J$ is integrable, then this assumption is equivalent to the fact that $c_1^{\mathrm{BC}}(M)=0$, and the convergence result belongs to Gill \cite{gill} who used the crucial zero order estimate from \cite{TW2}.

Secondly, we assume that there exists a volume form $\Omega$ with $\cric(\Omega)<0$, which is equivalent to the fact that $M$ is K\"ahler manifold with negative first Chern class if $J$ is integrable. By Theorem \ref{mainthm}, the flow \eqref{acrf} exists for all the time and we denote by $\tilde \omega(s)$ its solution.

Suppose that $t=\log(s+1)$ and $\omega=\frac{\tilde\omega}{s+1}$. We get a new metric which solves
\begin{align}
\label{nacrf}
\ppt \omega=-\cric(\omega)-\omega,\quad \omega(0)=\omega_0,
\end{align}
for $t\in[0,\infty)$. We claim that the flow \eqref{nacrf} is equivalent to the parabolic Monge-Amp\`ere equation
\begin{align}
\label{napcma}
\ppt\varphi=\log\frac{(\hat\omega+\ddbar\varphi)^n}{\Omega}-\varphi,\quad \hat\omega+\ddbar\varphi>0,\quad \varphi|_{t=0}=0,
\end{align}
where $\hat\omega=-\cric(\Omega)+e^{-t}\left(\cric(\Omega)+\omega_0\right)$ with
\begin{align}
\label{evolutionhatomega}
\ppt\hat\omega=-\cric(\Omega)-\hat\omega,\quad\hat\omega|_{t=0}=\omega_0.
\end{align}
Indeed, assume that $\omega$ is the solution to \eqref{nacrf}. By \eqref{nacrf} and \eqref{evolutionhatomega}, we know that
\begin{align}
\label{gai3}
\ppt(\omega-\hat\omega)=-(\omega-\hat\omega)+\ddbar\log\frac{\omega^n}{\Omega}.
\end{align}
We define $\varphi(t)$ by
\begin{align*}
\varphi(t):=e^{-t}\int_0^te^s\log\frac{\omega(s)^n}{\Omega}\md s,
\end{align*}
for any $t\in [0,\infty)$. This function $\varphi(t)$ satisfies
\begin{align}
\label{shoulianyong}
\ppt\varphi=\log\frac{\omega^n}{\Omega}-\varphi,\quad \varphi(0)=0.
\end{align}
Thanks to \eqref{gai3} and \eqref{shoulianyong}, we can deduce that
$$
\ppt\left(e^t(\omega-\hat\omega-\ddbar\varphi)\right)=0,\quad (\omega-\hat\omega-\ddbar\varphi)|_{t=0}=0,
$$
which implies $\omega=\hat\omega+\ddbar\varphi>0$ for all $t\geq 0$.

Conversely, assume that $\varphi$ is the solution to \eqref{napcma}. Taking $\ddbar$ on the both sides of \eqref{napcma}, we know that
$\omega=\hat\omega+\ddbar\varphi$ is the solution to \eqref{nacrf}.

Note that $\hat\omega$ converge smoothly to $-\cric(\Omega)>0$ as $t\longrightarrow \infty$. Hence there exists a uniform constant $C_0>0$ such that
\begin{align}
\label{cankaoduliangdengjia}
C_0^{-1}\omega_0\leq\hat\omega\leq C_0\omega_0.
\end{align}
It is easy to see that Theorem \ref{thm2} follows  from
\begin{thm}
Let $\varphi(t)$ be the solution to \eqref{napcma} on $M\times[0,\infty)$. Then $\varphi$ converge smoothly to $\varphi_{\infty}$ as $t\longrightarrow \infty$, and
we have
\begin{align}
\cric(\omega_{\infty})=-\omega_{\infty},
\end{align}
where $\omega_{\infty}=-\cric(\Omega)+\ddbar\varphi_{\infty}>0$.
\end{thm}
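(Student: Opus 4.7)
The plan is to establish uniform-in-$t$ a priori estimates of every order for $\varphi$, prove exponential decay of $\dot\varphi$ via the parabolic maximum principle applied to the linearized equation, deduce $C^0$ convergence of $\varphi(t)$ to some $\varphi_\infty$ by Cauchy completeness, and upgrade to $C^\infty$ convergence through the uniform higher-order bounds. Passing to the limit in \eqref{napcma} will then verify $\cric(\omega_\infty)=-\omega_\infty$. Two identities will be used repeatedly: $\hat\omega+\ppt\hat\omega=-\cric(\Omega)$ (from \eqref{evolutionhatomega}) and $(\ppt-\Delta^C_\omega)\dot\varphi=\tr_\omega(\ppt\hat\omega)-\dot\varphi$, obtained by differentiating \eqref{napcma}.

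\textbf{Zeroth-order estimates.} The bound $\|\varphi\|_{C^0}\le C$ is immediate from the maximum principle on \eqref{napcma}, using that $\hat\omega$ is uniformly equivalent to $\omega_0$ by \eqref{cankaoduliangdengjia}. For the upper bound on $\dot\varphi$ I would take $Q_+=(e^t-1)\dot\varphi-\varphi-nt$. A direct computation collapses the reference-metric contributions miraculously via the identities above:
\begin{equation*}
\Bigl(\ppt-\Delta^C_\omega\Bigr)Q_+=-\tr_\omega\Bigl[(1-e^{-t})(\cric(\Omega)+\omega_0)+\hat\omega\Bigr]=-\tr_\omega\omega_0<0.
\end{equation*}
Since $Q_+(0)=0$, the parabolic maximum principle gives $(e^t-1)\dot\varphi\le\varphi+nt\le C+nt$, a uniform upper bound on $\dot\varphi$ for $t\ge 1$; short-time existence handles $[0,1]$. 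For the lower bound, take $Q_-=\dot\varphi+\varphi$, which satisfies $(\ppt-\Delta^C_\omega)Q_-=\tr_\omega(-\cric(\Omega))-n$. At a spacetime minimum the left side is $\le 0$, so $\tr_\omega(-\cric(\Omega))\le n$ there, and the arithmetic-geometric inequality forces $\omega^n\ge(-\cric(\Omega))^n$. Combined with $\omega^n=e^{Q_-}\Omega$, this gives $Q_-\ge\min_M\log\bigl((-\cric(\Omega))^n/\Omega\bigr)$ at the minimum, hence $\dot\varphi\ge -C$ globally.

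\textbf{Higher-order estimates and decay.} With $\|\varphi\|_{C^0}$ and $\|\dot\varphi\|_{C^0}$ uniformly bounded, $\omega^n$ is uniformly comparable to $\omega_0^n$, and the first and second order estimates of Theorems \ref{thm1order} and \ref{thm2order} carry over with only minor bookkeeping: the extra $-\varphi$ term in \eqref{napcma} contributes only uniformly bounded quantities to the evolution of the relevant test functions. This yields $\omega\sim\omega_0$ uniformly in $t$, and then the higher-order estimates of \cite{chu1607,chujianchuncvpde} give uniform bounds $\|\varphi(t)\|_{C^k(\omega_0)}\le C_k$ for all $k$. Differentiating \eqref{napcma} in $t$ and multiplying by $e^t$,
\begin{equation*}
\Bigl(\ppt-\Delta^C_\omega\Bigr)(e^t\dot\varphi)=-\tr_\omega(\cric(\Omega)+\omega_0),
\end{equation*}
whose right-hand side is uniformly bounded using $\omega\sim\omega_0$. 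The maximum principle applied to $e^t\dot\varphi\mp Ct$ then gives $|\dot\varphi(t)|\le C(1+t)e^{-t}$.

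\textbf{Conclusion.} Since $t\mapsto C(1+t)e^{-t}$ is integrable on $[0,\infty)$, the identity $\varphi(t_2)-\varphi(t_1)=\int_{t_1}^{t_2}\dot\varphi\,\md s$ shows that $\varphi(t)$ is Cauchy in $C^0(M)$ and converges to some $\varphi_\infty\in C^0(M)$. The uniform $C^k$ bounds and the Arzel\`a--Ascoli theorem upgrade this to convergence in $C^\infty(M)$. Since $\hat\omega\to-\cric(\Omega)$ smoothly, passing to the limit in \eqref{napcma} gives $\log(\omega_\infty^n/\Omega)=\varphi_\infty$, with $\omega_\infty=-\cric(\Omega)+\ddbar\varphi_\infty>0$, and \eqref{omegaOmega} then yields
\begin{equation*}
\cric(\omega_\infty)=\cric(\Omega)-\ddbar\varphi_\infty=\cric(\Omega)-(\omega_\infty+\cric(\Omega))=-\omega_\infty,
\end{equation*}
as required. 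The step I expect to be the main obstacle is faithfully adapting the delicate perturbation argument of Theorem \ref{thm2order} to the new equation; the extra $-\varphi$ term threads through each evolution computation and must be tracked carefully, but being uniformly bounded it should produce only lower-order corrections that do not disturb the key structure.
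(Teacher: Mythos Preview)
Your proposal is correct and follows essentially the same approach as the paper. Your test quantity $Q_+=(e^t-1)\dot\varphi-\varphi-nt$ is exactly the negative of the paper's $\dot\varphi+\varphi+nt-e^t\dot\varphi$, the lower bound via $Q_-=\dot\varphi+\varphi$ is identical, and the adaptations of the first and second order estimates, the decay argument via $(\ppt-\Delta^C_\omega)(e^t\dot\varphi)=-\tr_\omega(\cric(\Omega)+\omega_0)$, and the passage to the limit all match the paper; the only extra item in the paper is a final step showing $\omega_\infty$ is independent of the initial metric, which the stated theorem does not require.
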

\begin{proof}
The proof consists of several steps as follows.

\emph{Step 1:} We deduce the uniform estimates for $\varphi$ and $\dot\varphi:=\ppt \varphi$ using the method from \cite{Ca,TZ,Ts} (see also \cite{twjdg}).
A simple maximum principle argument yields that $|\varphi|\leq C$ for a uniform constant $C>0$ independent of $t$.
A direct computation gives
\begin{align*}
\left(\ppt-\Delta_{\omega}^C\right)\dot \varphi
=&-\dot\varphi-\tr_{\omega}\left(\cric(\Omega)+\hat\omega\right)\\
=&-\dot\varphi-n+\Delta_{\omega}^C\varphi-\tr_{\omega}\left(\cric(\Omega)\right),
\end{align*}
i.e.,
\begin{align*}
\left(\ppt-\Delta_{\omega}^C\right)\left(\dot\varphi+\varphi\right)=-n-\tr_{\omega}\left(\cric(\Omega)\right).
\end{align*}
At the minimum $(x_0,t_0)$ of $\varphi+\dot\varphi$, without loss of generality, we can assume $t_0>0$, otherwise we can get the lower bound of $\dot\varphi$ directly. At $(x_0,t_0)$, we have $-\tr_{\omega}\left(\cric(\Omega)\right)\leq n$. This, together with $\cric(\Omega)<0$ and the arithmetic-geometric means inequality, yields $$\dot\varphi+\varphi=\log\frac{\omega^n}{\Omega}
=\log\frac{\omega^n}{\left(-\cric(\Omega)\right)^n}+\log\frac{\left(-\cric(\Omega)\right)^n}{\Omega}
\geq\log\frac{\left(-\cric(\Omega)\right)^n}{\Omega}\geq-C$$
at this point and hence everywhere. Since $|\varphi|\leq C$, we get $\dot\varphi\geq-C$.

Since $e^t\left(\cric(\Omega)+\hat\omega\right)=\cric(\Omega)+\omega_0$, a direct computation yields
$$
\left(\ppt-\Delta_{\omega}^C\right)\left(\dot\varphi+\varphi+nt-e^t\dot\varphi\right)=\tr_{\omega}\omega_0>0.
$$
This, together with the maximum principle, implies that there holds $\dot\varphi\leq Cte^{-t}\leq Ce^{-t/2}$ for $t\geq 1$. The uniform upper bound of $\varphi+\dot\varphi=\log\frac{\omega^n}{\Omega}$ and the arithmetic-geometric means inequality yield that
\begin{align}
\label{guanjianxiajie}
\tr_{\omega}\omega_0\geq n\left(\frac{\omega_0^n}{\omega^n}\right)^{1/n}= n\left(\frac{\Omega}{\omega^n}\frac{\omega_0^n}{\Omega}\right)^{1/n}\geq c,
\end{align}
for a uniform constant $c>0$.

Given \eqref{cankaoduliangdengjia} and \eqref{guanjianxiajie}, we can deduce the first and second order estimates.

\emph{Step 2}: We need the estimate $\sup\limits_{M\times[0,\infty)}|\partial\varphi|_{g_0}\leq C$ for a uniform constant $C>0$.
To see this, we just need to follow the proof of Theorem \ref{thm1order} word for word except replacing \eqref{pptu} by
\begin{align*}
\ppt|\partial\varphi|_{g_0}^2
=&e_k(\dot\varphi)\overline{e}_k(\varphi)+e_k(\varphi)\overline{e}_k(\dot\varphi)\\
=&g^{\overline{i}i}e_k(g_{i\overline{i}})\overline{e}_k(\varphi)
-e_k(\log\Omega)\overline{e}_k(\varphi)
+g^{\overline{i}i}e_k(\varphi)\overline{e}_k(g_{i\overline{i}})
-e_k(\varphi)\overline{e}_k(\log\Omega),\nonumber\\
&-2e_k(\varphi)\overline{e}_k(\varphi).\nonumber
\end{align*}
In the process of calculation of $\left(\Delta_{\omega}^C-\ppt\right)|\partial\varphi|_{g_0}^2$, the term $-2e_k(\varphi)\overline{e}_k(\varphi)$ is harmless. Then all other arguments are the same and hence we get the first order estimate.

\emph{Step 3}: We need the third order estimate $\sup\limits_{M\times[0,\infty)}|\nabla^2\varphi|_{g_0}\leq C$, where $\nabla^2\varphi$ is the Hessian with respect to the Levi-Civita connection of $g_0$. To see this, we still need to follow the proof of Theorem \ref{thm2order} word for word except replacing
\eqref{1apcma} and \eqref{2apcma} by
\begin{align*}
g^{\overline{i}i}\left(\nabla_{V_{1}}V_{1}\right)(g_{i\overline{i}})
=\left(\nabla_{V_{1}}V_{1}\right)(\dot\varphi)
+\left(\nabla_{V_{1}}V_{1}\right)(\log\Omega)+\left(\nabla_{V_{1}}V_{1}\right)(\varphi).
\end{align*}
and
\begin{align*}
g^{\overline{i}i}V_{1}V_1(g_{i\overline{i}})
=g^{\overline{p}p}g^{\overline{q}q}\left|V_1(g_{p\overline{q}})\right|^2+V_1V_1(\dot\varphi)+V_1V_1(\log\Omega)+V_1V_1(\varphi).
\end{align*}
Note that the new term $V_1V_1(\varphi)-\left(\nabla_{V_{1}}V_{1}\right)(\varphi)=\varphi_{V_1V_1}$ is harmless and absorbed by $G$, as required.

\emph{Step 4}: The higher order estimates follows from \cite{chu1607}.

\emph{Step 5}: Convergence result. The second order estimate implies that
$$
C^{-1}\omega_0\leq \omega\leq C\omega_0
$$
for a uniform constant $C>0$. This yields that
\begin{align*}
\left(\ppt-\Delta_{\omega}^C\right)(e^t\dot\varphi)=-\tr_{\omega}\left(\cric(\Omega)+\omega_0\right)\geq-C.
\end{align*}
This, together with the maximum principle, implies that $\dot\varphi\geq -(1+t)e^{-t}\geq -Ce^{-t/2}$.

We can deduce that $\dot\varphi$ converge uniformly to zero exponentially fast, and hence $\varphi$ converge uniformly exponentially fast to a continuous limit function $\varphi_{\infty}$. From the higher order uniform estimates of $\varphi$, it follows that $\varphi_{\infty}$ is smooth and
$$\varphi\longrightarrow \varphi_{\infty}$$
in smooth topology. Therefore, by passing to the limits in \eqref{shoulianyong}, we can deduce that the limit metric $\omega_{\infty}=-\cric(\Omega)+\ddbar\varphi_{\infty}$ satisfies
\begin{align}
\label{sol1}
\log\frac{\omega_{\infty}^n}{\Omega}=\varphi_{\infty}.
\end{align}
Taking $\ddbar$ on the both sides of this equation, we get
$$
\cric(\omega_{\infty})=-\omega_{\infty},
$$
as required.

\emph{Step 6}: We prove that $\omega_{\infty}$ is independent of the initial metric $\omega_0$. Assume that the normalized flow \eqref{nacrf} starts at another almost Hermitian metric $\omega_0'$. The same argument as above implies that there exists a smooth function $\varphi'_{\infty}$ such that
\begin{align}
\label{sol2}
(-\cric(\Omega)+\ddbar\varphi'_{\infty})^n=e^{\varphi_{\infty}'}\Omega,\quad \omega_{\infty}':=-\cric(\Omega)+\ddbar\varphi'_{\infty}>0.
\end{align}
Thanks to \eqref{sol1} and \eqref{sol2}, we get
\begin{align}
\label{sol3}
\omega_{\infty}'^n=(\omega_{\infty}+\ddbar\phi)^n=e^{\phi}\omega_{\infty}^n,
\end{align}
where we denote $\phi:=\varphi_{\infty}'-\varphi_{\infty}$. Applying the maximum principle to \eqref{sol3} shows that $\phi\equiv0$, i.e.,
$$\omega_{\infty}'=\omega_{\infty},$$
as desired.
\end{proof}
\section{An example}
\label{secexample}
In this section, as an example, we study the flow \eqref{acrf} on the (locally) homogeneous manifolds in more detail. We note that the flow \eqref{acrf} can be seen as a $(p,q)$ flow given in \cite{laurent} and hence we can use the method in  \cite{laurent}.

Let $(M,J,\omega_0)$ be a compact almost Hermitian manifold whose universal cover is a Lie group $G$ such that if $\pi:G\longrightarrow M$ is the covering map, then $\pi^{\ast}\omega_0$ and $\pi^{\ast}J$ are left invariant. For example, we can take $M=G/\Gamma$, where $\Gamma$ is a compact discrete subgroup of $G$, including solvmanifolds and nilmanifolds. Then the solution to \eqref{acrf} on $M$ is obtained by pulling down the corresponding flow solution on the Lie group $G$, which stays left invariant.
The flow \eqref{acrf} on $G$ becomes an ordinary differential equation for a family of almost Hermitian metrics $\omega(t)$ with respect to the fixed complex structure $J$ on the Lie algebra $\mathfrak{g}$ of the Lie group $G$.

It is sufficient to consider the flow \eqref{acrf} on Lie group $G$. Since the Chern-Ricci form  $p$ of a left invariant almost Hermitian metric $\omega$ defined by
$$p(X,Y)=-\frac{1}{2}\tr\, J\, \mathrm{ad}[X,Y]+\frac{1}{2}\tr\, \mathrm{ad}\, J[X,Y],\quad \forall\,X,\,Y\in\mathfrak{g}
$$
depends only on $J$ (see for example \cite{P} or \cite[Proposition 4.2]{V2}), the flow \eqref{acrf} becomes
$$
\ppt\omega(t)=-2p^{(1,1)},\quad \omega(0)=\omega_0
$$
with solution $\omega(t)$ given by
$$
\omega_t:=\omega(t)=\omega_0-2tp^{(1,1)}.
$$
We define
$$
p^{(1,1)}=\omega_0(P_0\cdot,\cdot)=\omega_t(P(t)\cdot,\cdot),
$$
which implies that
\begin{align}
P_t:=P(t)=(\mathrm{Id}-2tP_0)^{-1}P_0.
\end{align}
We call $P_0$ (resp. $P_t$) the Chern-Ricci operator of $\omega_0$ (resp. $\omega_t$).
It follows that the maximal existence time $T$ is given by
\begin{equation}
T=\left\{
\begin{array}{ll}
  \infty, &  \quad\text{if}\;P_0\leq 0, \\
  1/(2p_{+}), & \quad\text{otherwise},
\end{array}
\right.
\end{equation}
where $p_{+}$ is the maximal positive eigenvalue of the Chern-Ricci operator $P_0$ of $\omega_0$.

Let $p_1,\cdots,p_{2n}$ of eigenvalues of $P_0$ with the orthonormal basis $e_1,\cdots,e_{2n}$, i.e.,
$$
\omega_0(e_i,\overline{e}_j)=\delta_{ij},\quad P_0(e_{\alpha})=p_{\alpha}e_{\alpha},\quad \alpha=1,\cdots,2n.
$$
A direct calculation yields that the scalar curvature $R(\omega_t)$ of $\omega_t$ is given by
\begin{align}
\label{shuliangqulv}
R(\omega_t)=\tr_{\omega_t}p^{(1,1)}=\tr P_t=\sum\limits_{\alpha=1}^{2n}\frac{p_{\alpha}}{1-2tp_{\alpha}}.
\end{align}
From \eqref{shuliangqulv}, it follows that
\begin{align*}
\frac{\md}{\md t}R(\omega_t)=\sum\limits_{\alpha=1}^{2n}\frac{2p_{\alpha}^2}{(1-2tp_{\alpha})^2}\geq0.
\end{align*}
This implies that $R(\omega_t)$ is strictly increasing unless $P_t\equiv 0$, i.e., $\omega_t\equiv\omega_0$, and that the Chern scalar curvature must blow up in finite  singularities, i.e., if $T<\infty$, then there holds
$$
\int_0^{T}R(\omega_t)\md t=\infty.
$$
Also we note that
\begin{align}
\label{scalar}
R(\omega_t)\leq \frac{C}{T-t},
\end{align}
for a uniform constant $C>0$. We remark that \eqref{scalar} is the claim of \cite[Conjecture 7.7]{SW4} for the K\"ahler-Ricci flow on general compact K\"ahler manifolds (see \cite{gillscalar} for the Chern-Ricci flow on general compact Hermitian manifolds).

\end{CJK}
\end{document}